\documentclass[reqno, 12pt]{amsart}

\usepackage{mathrsfs}
\usepackage{amscd}
\usepackage{amsmath}
\usepackage{latexsym}
\usepackage{amsfonts}
\usepackage{amssymb}
\usepackage{amsthm}
\usepackage{graphicx}
\usepackage{tikz}
\usetikzlibrary{decorations.pathreplacing}
\usepackage{float}
\usepackage{hyperref}
\IfFileExists{makecell.sty}{\usepackage{makecell}}{} 
\usepackage{array}
\usepackage{booktabs}
\usepackage{multirow}
\usepackage{color,xcolor}

\newtheorem{theorem}{Theorem}[section]
\newtheorem{proposition}[theorem]{Proposition}
\newtheorem{corollary}[theorem]{Corollary}
\newtheorem{lemma}[theorem]{Lemma}

\newtheorem{remark}[theorem]{Remark}

\numberwithin{equation}{section}

\title[Inverse source problem]{Stability for the inverse source problem of the stochastic fractional Helmholtz equation}

 \author[D. Qiu]{Dong Qiu}
  \address{School of Mathematical Sciences, Zhejiang University, Hangzhou 310058, China}
\email{qiudong@zju.edu.cn}

  \author[X. Xu]{Xiang Xu}
  \address{School of Mathematical Sciences, and Center for Interdisciplinary Applied Mathematics, Zhejiang University, Hangzhou 310058, China}
\email{xxu@zju.edu.cn}
  
 \author[Y. Zhao]{Yue Zhao}
\address{School of Mathematics and Statistics, and Key Lab NAA-MOE, Central China Normal University,
Wuhan 430079, China}
\email{zhaoy@ccnu.edu.cn}

\subjclass[2020]{35R30, 78A46.}
\keywords{fractional Helmholtz equation, inverse source problem, white noise, stability}

\hypersetup{hidelinks}
\begin{document}

\begin{abstract}
This paper is concerned with the inverse source problem for the stochastic fractional Helmholtz equation driven by white noise.  For every fractional order $0<\alpha<1$, we prove the existence and uniqueness of the outgoing distributional solution to the direct problem with resolvent estimates at high frequencies, and establish its stochastic representation. For the inverse problem, we demonstrate that the variance of the source can be uniquely determined by the correlated random exterior data at a single frequency.
We further establish an increasing stability estimate for the inverse problem by using the multifrequency correlated exterior data. Our stability result shows that 
as the upper bound of the bandwidth of the utilized frequency increases, the stability will also improve.  
 The analysis employs the construction of geometric optics solutions, which connects the correlated data to the X-ray transform of the variance. 
\end{abstract}

\maketitle

\section{Introduction}

The fractional Helmholtz operator arises in the fixed-energy formulation of the fractional Schr\"odinger equation, developed by Laskin through a generalization of Feynman path integrals involving L\'evy paths \cite{Laskin1,Laskin2}. The fractional Laplacian also appears in models of anomalous diffusion and random motion with long-range jumps \cite{BV,Ros}. These connections have motivated extensive research on the analysis and numerical approximation of nonlocal partial differential equations involving the fractional Laplacian, see e.g. \cite{bl,sisc,S1,S2,TT, ZCV} and references cited therein. 

In uncertain environments, the source or the properties of the surrounding medium may fluctuate randomly, making stochastic models necessary to describe the resulting wave fields. The low regularity of random coefficients and sources creates additional difficulties in the analysis of the direct scattering problem. For the corresponding inverse problems, one seeks to determine statistical properties of the unknown source or medium, such as its mean and covariance, from measurements of the radiated field.

In this paper, we consider the fractional Helmholtz equation
\begin{align}\label{fhelm}
(-\Delta)^\alpha u(x)-k^{2\alpha}u(x)=f(x),\qquad x\in\mathbb R^3,
\end{align}
where $0<\alpha<1$, $k>0$ is the frequency, and $f=\sigma\dot W$ is the random source. Here $\dot W$ denotes real spatial white noise, which is formally the derivative of a three-dimensional Brownian sheet. The real-valued noise amplitude $\sigma$ is assumed to satisfy
\[
\sigma\in C_c^\infty(\mathbb R^3),\qquad
\operatorname{supp}\sigma\subset\subset B_r:=\{x\in\mathbb R^3:|x|<r\}.
\]
The variance density of the random source is $\sigma^2$.  It is known that
$f\in W^{-3/2-\epsilon,p}(\mathbb R^3)$ almost surely for every $p>1$ and $\epsilon>0$; see \cite{li2021inverse}.

The fractional Laplacian is defined through the Fourier transform by
\[
(-\Delta)^\alpha u
=\mathcal F^{-1}\{|\xi|^{2\alpha}\widehat u(\xi)\},
\qquad u\in H^{2\alpha}(\mathbb R^3).
\]
For sufficiently regular, decaying functions, it also admits the singular integral representation
\[
(-\Delta)^\alpha u(x)
=C(\alpha)\operatorname{p.v.}\int_{\mathbb R^3}
 \frac{u(x)-u(y)}{|x-y|^{3+2\alpha}}\,dy,
\]
where $C(\alpha)>0$ is a normalizing constant \cite{HSZ}. The distributional interpretation of this operator is given  in Section \ref{sec:green}. We consider the outgoing solution by imposing the Sommerfeld radiation condition
\begin{align}\label{src}
\lim_{r\to\infty}r(\partial_ru-iku)=0,\qquad r=|x|,
\end{align}
uniformly in the observation direction $\hat{x}=x/|x|$.
In this paper, we study both the direct and inverse problems for the fractional Helmholtz equation. Given the random source, the direct problem is to determine the outgoing random field and establish its existence, uniqueness, and energy estimates in suitable rough function spaces. The inverse problem is to recover the variance $\sigma^2$ from correlated exterior data.

Inverse source problems arise in many scientific and industrial applications, including antenna design and synthesis, biomedical imaging, and photoacoustic tomography \cite{arridge,blz,Isakov_1}. Inverse random source and potential problems for acoustic, elastic, and electromagnetic waves have been studied extensively \cite{LPS,LLM,LLW,li2021inverse,WXZ_1,WXZ}. For stochastic inverse scattering problems, correlations of the radiated random field usually carry information about the statistical properties of the source or medium, and multifrequency measurements provide a way to overcome the ill-posedness and improve stability of the inverse source problems.

There has also been progress on inverse problems involving fractional operators. Fractional analogues of the Calder\'on problem were investigated in \cite{grsu,gsu,ruland}, and simultaneous recovery of a source and a potential was considered in \cite{CL}. For the fractional Helmholtz equation, the recent work \cite{LL} studied direct and inverse scattering with a microlocally isotropic generalized Gaussian random source, whose covariance is a classical pseudodifferential operator. A deterministic inverse potential scattering problem was considered in \cite{msq}. Both of these works use far-field data at all high frequencies. However, to the best of our knowledge,
no results are available for the case of using the exterior data for scattering problems of the fractional Helmholtz equations.

For the direct problem, the principal difficulty is the distributional character of the white-noise source. We first construct the outgoing solution as the convolution of the fractional Green function with $\sigma\dot W$. The local behavior of the kernel gives the Sobolev regularity of the solution, while a reduction to the classical Helmholtz equation yields uniqueness by the Rellich theorem. This establishes a unique outgoing distributional solution for every $0<\alpha<1$. 
We also continue the truncated fractional resolvent holomorphically from the first quadrant to the right half-plane and prove high-frequency estimates on rough Sobolev spaces. The proof relies on the analysis of the Fourier multiplier.

When $3/4<\alpha<1$, we show that the stochastic representation also defines a continuous mild solution on the whole space. Following the constructive approach of \cite[Theorem 2.7]{BCL2016}, we use estimates for the Green function and its increments to establish continuity, and then obtain the mild solution as the limit of solutions driven by piecewise constant approximations of the noise. The restriction $\alpha>3/4$ is dictated by the local square integrability of the Green function at points where $\sigma$ is nonzero. It should be noticed that the restriction of the distributional solution to the exterior domain away from the support of the source is smooth. Thus, the exterior correlation data for the inverse problem are   well defined for all $0<\alpha<1$. 

For the inverse problem, by using correlated measurements $\mathbb E[u(x,k)u(z_0,k)]$ with a fixed measurement point $z_0$ and $x\in U$ with $U$ being a bounded exterior domain,
we prove the uniqueness at a single frequency. The key ingredient in the proof is an application of the unique continuation property of the fractional Laplacian given by \cite[Theorem 1.2]{gsu}. We next establish a quantitative stability estimate by using the multifrequency correlated data on an exterior shell and a surrounding sphere. It is worthing mentioning  that the traditional boundary integration-by-parts identities used for classical wave equations in \cite{blz,LZZ} do not directly reduce the current nonlocal problem to surface measurements. Thus, different approaches shall be developed for the fractional equations. To deal with this issue, we combine the high-frequency expansion of the fractional Green function with the geometric optics construction to relate the correlated data to the X-ray transform of the variance.  Moreover, we employ an analytic continuation principle to derive a logarithmic-type stability estimate for every fixed $0<\alpha<1$. The result shows that as the upper bound of the bandwidth of the given frequency increases, the logarithmic stability diminishes and approaches a Lipschitz one.

The paper is organized as follows. Section \ref{dp} treats the direct problem. We first derive estimates for the Green function and establish the well-posedness of the direct problem. Then 
we give a constructive proof for the continuous mild solution. Section \ref{ip} is devoted to the single-frequency uniqueness and multifrequency increasing stability for the inverse random source problem. The proofs of the resolvent estimates are collected in the Appendix.

\section{Direct problem}\label{dp}

In this section, we establish the well-posedness of the direct problem and obtain the stochastic representation used in the inverse analysis. We begin with the outgoing Green function and the distributional solution, then construct the continuous mild solution, and conclude with energy estimates that describe the dependence on the wavenumber.

\subsection{Green function and outgoing distributional solution}\label{sec:green}

 Fix $k>0$ and a bounded cube $D$ containing $\operatorname{supp}\sigma$ in its interior. We regard $W$ as real isonormal white noise on $L^2(D;\mathbb R)$ and extend it complex linearly to complex-valued integrands. In particular,
\begin{equation}\label{eq:mild-isometry}
\mathbb E\left|\int_D h(y)\,dW_y\right|^2=\|h\|_{L^2(D)}^2,
\qquad h\in L^2(D;\mathbb C).
\end{equation}
The source $f=\sigma\dot W$ is defined by
$\langle f,\varphi\rangle=\int_D\sigma(y)\varphi(y)\,dW_y$.
We use the Fourier transform
$\widehat\varphi(\xi)=\int_{\mathbb R^3}e^{-ix\cdot\xi}\varphi(x)\,dx$.
Set
\begin{equation}\label{eq:mild-green}
 p_{\alpha,k}(\xi)=|\xi|^{2\alpha}-k^{2\alpha},\qquad
 G_{\alpha,k}=\mathcal F^{-1}m^+_{\alpha,k},\qquad
 m^+_{\alpha,k}=(p_{\alpha,k}-i0)^{-1}.
\end{equation}
Here
\[
(p_{\alpha,k}-i0)^{-1}
=\lim_{\varepsilon\downarrow0}
\frac{1}{|\xi|^{2\alpha}-k^{2\alpha}-i\varepsilon}
\quad\text{in }\mathcal S'(\mathbb R^3).
\]
This limiting absorption prescription selects the outgoing solution, and
$p_{\alpha,k}m^+_{\alpha,k}=1$ gives
$\bigl((-\Delta)^\alpha-k^{2\alpha}\bigr)G_{\alpha,k}=\delta_0$.
The products near $\xi=0$ are understood as products of locally integrable functions, and near the spectral sphere the symbol is smooth. We write $G_{\alpha,k}(x-y)$ for the corresponding kernel; it is the function denoted by $G_k(x,y)$ in Section~\ref{ip}.

The following lemma gives the local and far-field estimates for the outgoing Green function. The constants in this subsection and in Section \ref{sec:mild} may depend on the fixed $\alpha$ and $k$.

\begin{lemma}\label{lem:mild-kernel}
Let $0<\alpha<1$. The kernel $G_{\alpha,k}$ is locally integrable and smooth away from the origin, and
\begin{equation}\label{eq:mild-singularity}
G_{\alpha,k}(z)=c_\alpha|z|^{2\alpha-3}(1+o(1)),
\qquad |z|\rightarrow0,
\end{equation}
where $c_\alpha=\sin(\pi\alpha)\Gamma(2-2\alpha)/(2\pi^2)>0$.
If $3/4<\alpha<1$, then, for every bounded cube $Q$ and every
$0<\beta<2\alpha-3/2$,
\begin{align}
 \sup_{x\in Q}\int_D|G_{\alpha,k}(x-y)|^2\,dy&\le C,
 \label{eq:mild-kernel-L2}\\
 \int_D|G_{\alpha,k}(x-y)-G_{\alpha,k}(z-y)|^2\,dy
 &\le C_\beta|x-z|^{2\beta},\qquad x,z\in Q.
 \label{eq:mild-kernel-increment}
\end{align}
Moreover, if $y$ ranges over a fixed compact set, the kernel and each of its $y$-derivatives satisfy
\begin{align}
 \partial_y^\nu G_{\alpha,k}(r\theta-y)&=O(r^{-1}),\label{eq:mild-kernel-decay}\\
 (\partial_r-ik)\partial_y^\nu G_{\alpha,k}(r\theta-y)&=O(r^{-2}),\label{eq:mild-kernel-radiation}
\end{align}
uniformly in $\theta\in\mathbb S^2$ as $r\to\infty$.
\end{lemma}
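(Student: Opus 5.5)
I would split the multiplier $m^+_{\alpha,k}$ with a smooth radial cutoff $\chi(\xi)$ that equals $1$ near the spectral sphere $|\xi|=k$ and vanishes near $\xi=0$ and for large $|\xi|$. This gives
\[
G_{\alpha,k}=G^{\rm near}+G^{\rm far},\qquad
G^{\rm near}=\mathcal F^{-1}\bigl(\chi m^+_{\alpha,k}\bigr),\qquad
G^{\rm far}=\mathcal F^{-1}\bigl((1-\chi)p_{\alpha,k}^{-1}\bigr).
\]

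\textbf{The far part.} Since $(1-\chi)p_{\alpha,k}^{-1}$ is smooth away from $0$, this piece carries the singularity at the origin. Write
\[
(1-\chi)p_{\alpha,k}^{-1}=|\xi|^{-2\alpha}+r(\xi),\qquad r(\xi)=O(|\xi|^{-4\alpha})\ \text{as }|\xi|\to\infty .
\]
The remainder $r$ is locally integrable near $0$ and compactly supported corrections are smooth. The Riesz potential formula
\[
\mathcal F^{-1}\bigl(|\xi|^{-2\alpha}\bigr)=\frac{\Gamma(3/2-\alpha)}{4^\alpha\pi^{3/2}\Gamma(\alpha)}|z|^{2\alpha-3}
\]
gives the leading term. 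A duplication/reflection identity should convert this constant to $c_\alpha=\sin(\pi\alpha)\Gamma(2-2\alpha)/(2\pi^2)$; I would verify this by checking one value of $\alpha$ and using the functional equations.

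The inverse transform of $r$ is $O(|z|^{4\alpha-3})$, or $O(\log|z|)$ or bounded when $4\alpha\ge3$, which is $o(|z|^{2\alpha-3})$. Smoothness of $G^{\rm far}$ away from $0$ follows since $\partial_\xi^\gamma$ of the symbol is integrable at infinity after enough derivatives, so $z^\gamma G^{\rm far}$ is as regular as we like. Together with the near part being smooth, this gives local integrability and \eqref{eq:mild-singularity}, since $2\alpha-3>-3$.

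\textbf{The near part.} Near $|\xi|=k$ I would write
\[
|\xi|^{2\alpha}-k^{2\alpha}=(|\xi|-k)\,q(|\xi|),\qquad q(k)=2\alpha k^{2\alpha-1}>0 .
\]
Then
\[
\chi m^+_{\alpha,k}=\frac{\chi}{q(|\xi|)}\,(|\xi|-k-i0)^{-1}.
\]
Passing to polar coordinates with the radial formula $\mathcal F^{-1}g(|\xi|)(z)=\frac{1}{2\pi^2|z|}\int_0^\infty g(\rho)\rho\sin(\rho|z|)\,d\rho$, the Plemelj decomposition of $(\rho-k-i0)^{-1}$ yields the expansion
\[
G^{\rm near}(z)=\frac{k^{2-2\alpha}}{4\pi\alpha}\,\frac{e^{ik|z|}}{|z|}+O(|z|^{-2}).
\]
Derivatives obey the same structure, with each $\partial_r$ acting on $e^{ik|z|}$ producing $ik$ plus $O(|z|^{-2})$. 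For $G^{\rm far}$, the smooth symbol decays faster than any power in $|z|$ at infinity, after noting that the $|\xi|^{-2\alpha}$ part is subtracted with a cutoff near the origin only affecting small frequencies; the resulting tail behaves like $|z|^{2\alpha-3}$ times the Fourier transform of a compactly supported jump-free function, which is still $O(|z|^{-2})$ for $\alpha<1/2$. This tail needs care: the low-frequency behaviour of $(1-\chi)p^{-1}\approx -k^{-2\alpha}$ is smooth at $0$ except through $|\xi|^{2\alpha}$ terms, so its contribution is $O(|z|^{-3-2\alpha})$. Applying this with $z=r\theta-y$ and $y$ in a compact set, where $y$-derivatives equal $\pm z$-derivatives, gives \eqref{eq:mild-kernel-decay} and \eqref{eq:mild-kernel-radiation}. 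I expect this far-field bookkeeping, and the precise Plemelj computation, to be the main technical obstacle.

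\textbf{The $L^2$ bounds for $3/4<\alpha<1$.} Estimate \eqref{eq:mild-kernel-L2} follows since $|z|^{2(2\alpha-3)}$ is integrable in $\mathbb R^3$ exactly when $4\alpha-6>-3$. For \eqref{eq:mild-kernel-increment}, Plancherel is cleaner: extend the integral to $\mathbb R^3$ after multiplying by a cutoff $\psi\in C_c^\infty$ equal to $1$ on $Q-D$. Then
\[
\int_D|G(x-y)-G(z-y)|^2\,dy\le\int|e^{ix\cdot\xi}-e^{iz\cdot\xi}|^2\,|\widehat{\psi G}(\xi)|^2\,d\xi .
\]
Since $\widehat{\psi G}=O(\langle\xi\rangle^{-2\alpha})$, and the near-sphere singularity is mollified by $\psi$, we can use $|e^{ix\cdot\xi}-e^{iz\cdot\xi}|\le 2|x-z|^\beta|\xi|^\beta$. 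The resulting integral $\int\langle\xi\rangle^{2\beta-4\alpha}\,d\xi$ converges iff $\beta<2\alpha-3/2$. Checking the decay of $\widehat{\psi G}$ requires $\psi G=\psi G^{\rm far}+\psi G^{\rm near}$, with the second smooth and the first having symbol convolved with a Schwartz function.
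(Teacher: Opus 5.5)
Your argument is correct, but for the pointwise statements it takes a genuinely different route from the paper.

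The paper uses the fractional-power formula of Martinez--Sanz to write $G_{\alpha,k}(z)=\frac{k^{2-2\alpha}}{\alpha}\frac{e^{ik|z|}}{4\pi|z|}+J_{\alpha,k}(|z|)$, where $J_{\alpha,k}$ is an explicit Laplace-type integral in $t$. The substitution $s=r\sqrt t$ gives $c_\alpha$ directly as $r\downarrow0$. For $r\ge1$ the same substitution gives $|\partial^\nu J_{\alpha,k}|\le C|z|^{-3-2\alpha-|\nu|}$, so \eqref{eq:mild-kernel-decay}--\eqref{eq:mild-kernel-radiation} reduce to the classical spherical wave.

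Your Fourier-side splitting avoids that formula and shows where each feature comes from:
\begin{itemize}
\item the spectral sphere produces exactly the outgoing wave (your Plemelj step in fact leaves an $O(|z|^{-\infty})$ remainder, since $\rho\chi(\rho)/q(\rho)$ is Schwartz in $\rho$);
\item $|\xi|\to\infty$ produces the local singularity;
\item the $|\xi|^{2\alpha}$ term at $\xi=0$ produces the $|z|^{-3-2\alpha}$ tail.
\end{itemize}
The cost is that you rely on symbol--kernel estimates and do not track $k$. The explicit formula also yields $|J_{\alpha,k}(d)|\le Ck^{-4\alpha}d^{-3-2\alpha}$ and the exact value of $\operatorname{Im}G_{\alpha,k}$, and both are reused in Section~\ref{ip}.

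For \eqref{eq:mild-kernel-increment} the two proofs are essentially the same Plancherel argument. The paper takes $\chi=1$ on a ball containing both $0$ and the sphere, so the remaining symbol lies in $L^2(\mathbb R^3)$ and no spatial cutoff $\psi$ is needed.

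Two cleanups are worth making.
\begin{enumerate}
\item The constant check you deferred does work. The identities $\Gamma(2-2\alpha)=2^{1-2\alpha}\pi^{-1/2}\Gamma(1-\alpha)\Gamma(3/2-\alpha)$ and $\sin(\pi\alpha)=\pi/(\Gamma(\alpha)\Gamma(1-\alpha))$ turn $c_\alpha$ into your Riesz constant.
\item In the far-field paragraph, drop the detour through the global subtraction of $|\xi|^{-2\alpha}$. Its kernel decays only like $|z|^{2\alpha-3}$ and is cancelled by that of $r$. Instead, split $(1-\chi)p_{\alpha,k}^{-1}$ into two pieces. The first is a piece near $\xi=0$, equal there to $-k^{-2\alpha}\sum_{j\ge0}(|\xi|/k)^{2j\alpha}$. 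The second is a smooth symbol of order $-2\alpha$ whose kernel decays rapidly away from the origin. This same split is what makes your smoothness-away-from-the-origin argument rigorous, since high $\xi$-derivatives of the symbol are not integrable at $\xi=0$.
\end{enumerate}
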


\begin{proof}
The scalar fractional resolvent formula, with the outgoing boundary value at the positive spectral parameter, gives
\begin{equation}\label{eq:mild-green-decomposition}
G_{\alpha,k}(z)=\frac{k^{2-2\alpha}}{\alpha}
\frac{e^{ik|z|}}{4\pi|z|}+J_{\alpha,k}(|z|),
\end{equation}
where
\begin{equation}\label{eq:mild-J}
J_{\alpha,k}(r)=\frac{\sin(\pi\alpha)}{4\pi^2r}
\int_0^\infty
\frac{t^\alpha e^{-r\sqrt t}}
{t^{2\alpha}-2k^{2\alpha}t^\alpha\cos(\pi\alpha)+k^{4\alpha}}\,dt.
\end{equation}
This is the kernel form of the fractional power identity in
\cite[(5.28)]{MS}; see also the outgoing Green function analysis in \cite{LL}.
The denominator is bounded below by
$c_\alpha'(t^{2\alpha}+k^{4\alpha})$ for some $c_\alpha'>0$.
Substituting $s=r\sqrt t$ in \eqref{eq:mild-J} and applying dominated convergence yields
\[
\lim_{r\downarrow0}r^{3-2\alpha}J_{\alpha,k}(r)
=\frac{\sin(\pi\alpha)}{2\pi^2}
\int_0^\infty s^{1-2\alpha}e^{-s}\,ds=c_\alpha.
\]
The classical term in \eqref{eq:mild-green-decomposition} is of lower singular order, proving \eqref{eq:mild-singularity}. Local integrability follows since $2\alpha>0$. Differentiation under the integral in \eqref{eq:mild-J} is valid away from $r=0$. The same substitution, now for $r\ge1$, gives
$|\partial_z^\nu J_{\alpha,k}(|z|)|\le C_\nu|z|^{-3-2\alpha-|\nu|}$.
Together with the classical outgoing kernel this proves
\eqref{eq:mild-kernel-decay}--\eqref{eq:mild-kernel-radiation}.

For the increment estimate, choose $\chi\in C_c^\infty(\mathbb R^3)$ equal to one on a neighborhood of $\{|\xi|\le 2k+1\}$ and write
\[
G_{\alpha,k}=S+V,\qquad
S=\mathcal F^{-1}(\chi m^+_{\alpha,k}),\qquad
V=\mathcal F^{-1}b,\qquad
b=\frac{1-\chi}{|\xi|^{2\alpha}-k^{2\alpha}}.
\]
The inverse Fourier transform $S$ of a compactly supported distribution is smooth. On bounded sets its contribution to the left-hand side of \eqref{eq:mild-kernel-increment} is bounded by $C|x-z|^2$. The high-frequency multiplier satisfies
$|b(\xi)|\le C\langle\xi\rangle^{-2\alpha}$, where $\langle\xi\rangle = (1+|\xi|^2)^{1/2}$.
Consequently $b\in L^2$ when $4\alpha>3$, and Plancherel's identity gives
\begin{align*}
\|V(x-\cdot)-V(z-\cdot)\|_{L^2(\mathbb R^3)}^2
&\le C|x-z|^{2\beta}
\int_{|\xi|>1}|\xi|^{2\beta-4\alpha}\,d\xi\\
&\le C_\beta|x-z|^{2\beta},
\end{align*}
provided $\beta<2\alpha-3/2$.
This proves \eqref{eq:mild-kernel-increment}. The same decomposition also gives \eqref{eq:mild-kernel-L2}.
\end{proof}

We now apply these estimates to the random source and construct the corresponding outgoing distributional solution. For a tempered distribution whose Fourier transform is locally integrable near the origin, multiplication by $p_{\alpha,k}$ is understood as ordinary function multiplication near zero and distributional multiplication away from zero. A smooth cutoff combines these two definitions, which agree on their overlap. This specifies the distributional interpretation of \eqref{fhelm} in the solution class below.

\begin{proposition}\label{prop:mild-exterior}
Let $0<\alpha<1$, $k>0$, and $\sigma\in C_c^\infty(\mathbb R^3)$. There is a unique outgoing random distribution
\begin{equation}\label{eq:mild-distribution}
u=G_{\alpha,k}*(\sigma\dot W),
\end{equation}
and, almost surely,
\begin{equation}\label{eq:mild-Sobolev}
u\in H^t_{\rm loc}(\mathbb R^3),\qquad t<2\alpha-\frac32.
\end{equation}
Its restriction to $\mathbb R^3\setminus\operatorname{supp}\sigma$ has a smooth modification given by
\begin{equation}\label{eq:mild-exterior-integral}
u(x,k)=\int_DG_{\alpha,k}(x-y)\sigma(y)\,dW_y,
\qquad x\notin\operatorname{supp}\sigma.
\end{equation}
On one event of probability one, this distribution solves \eqref{fhelm}, is smooth outside a ball, and satisfies
\begin{equation}\label{eq:mild-outgoing-class}
\sup_{\theta\in\mathbb S^2}|u(r\theta)|=O(r^{-1}),\qquad
\sup_{\theta\in\mathbb S^2}r|\partial_ru(r\theta)-iku(r\theta)|\rightarrow0.
\end{equation}
It is unique among tempered distributional solutions whose Fourier transforms are locally integrable near the origin, which are smooth outside a ball and satisfy \eqref{eq:mild-outgoing-class}.
\end{proposition}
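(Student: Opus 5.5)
We split the proof into existence with regularity, the exterior representation, and uniqueness.

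\emph{Existence and regularity.} We define $u$ by duality: for $\varphi\in\mathcal S(\mathbb R^3)$ set
\[
\langle u,\varphi\rangle=\langle \sigma\dot W,\widetilde G_{\alpha,k}*\varphi\rangle=\int_D\sigma(y)\,(\widetilde G_{\alpha,k}*\varphi)(y)\,dW_y,
\]
where $\widetilde G_{\alpha,k}(z)=G_{\alpha,k}(-z)$. Lemma~\ref{lem:mild-kernel} gives $G_{\alpha,k}\in L^1_{\rm loc}$ with $O(|z|^{-1})$ decay, so $\widetilde G_{\alpha,k}*\varphi$ is continuous and bounded on $D$, and \eqref{eq:mild-isometry} makes this well defined. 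For the Sobolev regularity we use that $f=\sigma\dot W\in W^{-3/2-\epsilon,p}$ almost surely and has compact support. We split $G_{\alpha,k}=S+V$ as in the proof of Lemma~\ref{lem:mild-kernel}. Then $S*f$ is smooth. The multiplier $b$ of $V$ is a symbol of order $-2\alpha$, so $V*f\in H^{-3/2-\epsilon+2\alpha}$ almost surely. Since $\epsilon>0$ is arbitrary, this gives \eqref{eq:mild-Sobolev}; the $L^2$ base case can also be checked directly by computing $\mathbb E\|\langle\xi\rangle^{t}\widehat{\chi u}\|_{L^2}^2$ via \eqref{eq:mild-isometry}.

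The equation follows from $p_{\alpha,k}m^+_{\alpha,k}=1$. Indeed, $\widehat u=m^+_{\alpha,k}\widehat f$, where $\widehat f$ is smooth because $f$ has compact support. Near $\xi=0$ the product is an ordinary locally integrable function, which is consistent with the multiplication convention stated before the proposition.

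\emph{Exterior representation and outgoing class.} For $x$ at positive distance from $\operatorname{supp}\sigma$, the map $y\mapsto G_{\alpha,k}(x-y)\sigma(y)$ is smooth, so \eqref{eq:mild-exterior-integral} is a well-defined Gaussian field. Its $x$-derivatives are the integrals $\int\partial_x^\nu G\,\sigma\,dW$, with
\[
\mathbb E|\partial^\nu u(x)-\partial^\nu u(z)|^2\le C|x-z|^2
\]
on compact sets away from $\operatorname{supp}\sigma$. Kolmogorov's continuity theorem, applied to all orders, gives a smooth modification. Testing against $\varphi$ supported in the exterior and using stochastic Fubini identifies this modification with the distribution $u$.

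For the behaviour at infinity we write $G_{\alpha,k}(r\theta-y)$ as a Taylor expansion, or more simply integrate by parts in $y$. Using the $y$-derivative bounds \eqref{eq:mild-kernel-decay}--\eqref{eq:mild-kernel-radiation} together with Kolmogorov/Garsia–Rodemich–Rumsey bounds on the rescaled field $r\,u(r\theta)$ over dyadic shells, we aim for $\sup_\theta|u(r\theta)|=O(r^{-1})$ and $r\sup_\theta|(\partial_r-ik)u|\to0$. This should hold on one event of probability one, for instance via a Borel–Cantelli argument over the shells.

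\emph{Uniqueness.} Let $w$ be the difference of two solutions. Then $p_{\alpha,k}\widehat w=0$ and $\widehat w\in L^1_{\rm loc}$ near the origin, so $\operatorname{supp}\widehat w\subset\{|\xi|=k\}$; the origin cannot carry mass because $\widehat w$ is a function there. On this sphere $p_{\alpha,k}$ vanishes simply with nonzero gradient, so locally $p_{\alpha,k}=q\,(|\xi|^2-k^2)$ with $q$ smooth and nonvanishing. Dividing by $q$ gives $(|\xi|^2-k^2)\widehat w=0$, that is, $(\Delta+k^2)w=0$ in $\mathbb R^3$. Now $w$ is smooth outside a ball and satisfies the Sommerfeld condition \eqref{eq:mild-outgoing-class}. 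Rellich's lemma then gives $w=0$ outside a ball, and unique continuation for the classical Helmholtz equation (or analyticity of $w$) gives $w\equiv0$.

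\emph{Main obstacle.} I expect the hardest step to be the almost-sure uniform radiation condition, because it requires controlling a supremum over $\theta$ and $r\to\infty$ of a random field. A cleaner route may be to expand $u$ for $|x|>R$ in spherical harmonics with random coefficients and show convergence of the expansion, rather than rely on Kolmogorov-type bounds.
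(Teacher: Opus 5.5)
Your treatment of existence, Sobolev regularity, the equation, the exterior smooth modification, and uniqueness follows the paper's proof essentially step for step. The ingredients match: a compactly supported $H^{-s}$ realization of $\sigma\dot W$, the split $G_{\alpha,k}=S+V$, and the identity $p_{\alpha,k}m^+_{\alpha,k}=1$ with ordinary multiplication near $\xi=0$. Likewise stochastic Fubini in the exterior, and reduction of the difference of two solutions to the classical Helmholtz equation followed by Rellich's theorem.

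The genuine gap is the outgoing class \eqref{eq:mild-outgoing-class}. You only state what you ``aim for'' and leave it as the main obstacle. So the claim that, on one event of probability one, $\sup_\theta|u(r\theta)|=O(r^{-1})$ and $\sup_\theta r|(\partial_r-ik)u(r\theta)|\to0$ is not established. The route you sketch is also delicate. A Borel--Cantelli argument over dyadic shells with Gaussian tail bounds naturally gives only $\sup_\theta r|u(r\theta)|=O(\sqrt{\log\log r})$. This is because summability over infinitely many shells forces the thresholds to grow like $\sqrt{\log n}$ on the $n$-th shell. A true $O(r^{-1})$ bound would require exploiting the strong correlation between shells.

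The missing idea is that no sample-path supremum estimate is needed. You already have, on one full-measure event, that $F=\sigma\dot W$ is a compactly supported distribution of finite order: it lies in $H^{-s}$, $s>3/2$, with support in $\operatorname{supp}\sigma$. Hence there are a random constant $C_\omega$, an integer $m$, and a cutoff $\chi\in C_c^\infty(\mathbb R^3)$ equal to one near $\operatorname{supp}\sigma$ such that $|\langle F,\psi\rangle|\le C_\omega\max_{|\nu|\le m}\sup_{y\in\operatorname{supp}\chi}|\partial^\nu\psi(y)|$. For $|x|$ large, $u(x)=\langle F,\chi G_{\alpha,k}(x-\cdot)\rangle$ and $(\partial_r-ik)u(r\theta)=\langle F,\chi(\partial_r-ik)G_{\alpha,k}(r\theta-\cdot)\rangle$. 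The estimates \eqref{eq:mild-kernel-decay}--\eqref{eq:mild-kernel-radiation} hold for every $y$-derivative, uniformly in $y$ on a compact set and in $\theta\in\mathbb S^2$. They then give $\sup_\theta|u(r\theta)|\le C_\omega r^{-1}$ and $\sup_\theta r|(\partial_r-ik)u(r\theta)|\le C_\omega r^{-1}$ deterministically on that same event.

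This is how the paper concludes, and it is why Lemma~\ref{lem:mild-kernel} records bounds on all $y$-derivatives. Your remark about integrating by parts in $y$ points this way, but you then turn to probabilistic bounds instead.

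A smaller point: applying Kolmogorov's theorem to each $\partial^\nu u$ gives separate continuous modifications, and you must still check that they are derivatives of one another. The paper instead bounds $\mathbb E\|u\|_{H^m(O)}^2$ for every $m$ on exterior neighborhoods $O$ and uses Sobolev embedding, which yields the smooth modification directly.
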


\begin{proof}
The noise admits a compactly supported $H^{-s}(\mathbb R^3)$-valued realization for every $s>3/2$. Indeed, an orthonormal basis construction and the isometry \eqref{eq:mild-isometry} give
\[
\mathbb E\|\sigma\dot W\|_{H^{-s}(\mathbb R^3)}^2
=(2\pi)^{-3}\|\sigma\|_{L^2(D)}^2
\int_{\mathbb R^3}\langle\xi\rangle^{-2s}\,d\xi<\infty.
\]
The realizations can be chosen to agree as distributions, with support contained in $\operatorname{supp}\sigma$. Denote this realization by $F$. Since $F$ has compact support, \eqref{eq:mild-distribution} defines a tempered distribution, with
\[
\widehat u=m^+_{\alpha,k}\widehat F.
\]
In the decomposition used in Lemma~\ref{lem:mild-kernel}, $S*F$ is smooth and the high-frequency multiplier $b$ maps $H^{-s}$ into $H^{2\alpha-s}$. Taking $s>3/2$ arbitrarily close to $3/2$ proves \eqref{eq:mild-Sobolev}.

The last Fourier identity also verifies the equation without multiplying a general distribution by a symbol nonsmooth at zero. Near zero, $m^+_{\alpha,k}\widehat F$ is an ordinary locally bounded function and $p_{\alpha,k}$ is nonzero. Away from zero, $p_{\alpha,k}$ is smooth and
$p_{\alpha,k}m^+_{\alpha,k}=1$ in the distributional sense. Using a cutoff near zero therefore gives
$p_{\alpha,k}\widehat u=\widehat F$, which is the Fourier interpretation of \eqref{fhelm} for this solution.

Let $K$ be a compact subset of the complement of $\operatorname{supp}\sigma$. The kernels
$\sigma(y)\partial_x^\nu G_{\alpha,k}(x-y)$ are square integrable in $y$, uniformly for $x\in K$. The stochastic integral in \eqref{eq:mild-exterior-integral} and all its derivatives are thus defined in mean square. Applying the isometry on bounded open neighborhoods of $K$ gives finite expected Sobolev norms of every integer order. Sobolev embedding and a countable exhaustion yield a smooth modification. Stochastic Fubini, tested against compactly supported smooth functions on these neighborhoods, identifies this modification with $G_{\alpha,k}*F$. The identities hold simultaneously for all tests by taking a countable dense set first. Finally, pairing the kernel with the compactly supported, finite-order distribution $F$ and using \eqref{eq:mild-kernel-decay}--\eqref{eq:mild-kernel-radiation} proves \eqref{eq:mild-outgoing-class} pathwise.

For uniqueness, let $w$ be the difference of two solutions in the stated class. The equation $p_{\alpha,k}\widehat w=0$ first implies that $\widehat w=0$ near zero, where it is a locally integrable function. Away from zero its support is contained in $|\xi|=k$. On a neighborhood of this sphere the quotient
\[
\frac{|\xi|^2-k^2}{|\xi|^{2\alpha}-k^{2\alpha}}
\]
extends to a smooth function. Multiplying the equation by this quotient, with a cutoff around the sphere, gives
$(|\xi|^2-k^2)\widehat w=0$ on the whole space. Hence $w$ is a smooth entire solution of the classical homogeneous Helmholtz equation. The Sommerfeld condition in \eqref{eq:mild-outgoing-class} and the classical Rellich uniqueness theorem give $w=0$.
\end{proof}

\subsection{Continuous mild solution}\label{sec:mild}

 The exterior representation in Proposition \ref{prop:mild-exterior} provides the pointwise field needed for the inverse problem. We now show that, for $3/4<\alpha<1$, this representation defines a continuous random field throughout $\mathbb R^3$. We refer to this stochastic Green representation as the mild solution of \eqref{fhelm}. The proof follows the constructive approach of \cite[Theorem 2.7]{BCL2016}, using the fractional kernel estimates established above.

\begin{theorem}\label{thm:mild-continuous}
Let $3/4<\alpha<1$, $k>0$, and let $\sigma\in C_c^\infty(\mathbb R^3)$ be real-valued. There exists a continuous random field given by
\begin{equation}\label{eq:mild-integral}
u(x,k)=\int_DG_{\alpha,k}(x-y)\sigma(y)\,dW_y,
\qquad x\in\mathbb R^3.
\end{equation}
For the same white noise, this field is unique up to indistinguishability among continuous fields satisfying \eqref{eq:mild-integral}. It represents the distributional solution in Proposition \ref{prop:mild-exterior} and satisfies its outgoing conditions. Moreover, for every bounded cube $Q$, every $0<\gamma<2\alpha-3/2$, and every $1\le p<\infty$,
\begin{equation}\label{eq:mild-holder}
\mathbb E\|u(\cdot,k)\|_{C^{0,\gamma}(Q)}^p<\infty.
\end{equation}
The mild solution is also the mean-square $L^2(Q)$ limit of the outgoing solutions corresponding to piecewise constant approximations of the white noise.
\end{theorem}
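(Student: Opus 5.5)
We plan to build the field directly from the kernel bounds of Lemma~\ref{lem:mild-kernel} through Gaussian moment estimates and Kolmogorov's continuity theorem, and afterwards to identify it with the distribution of Proposition~\ref{prop:mild-exterior}. First, for each fixed $x\in\mathbb R^3$, the bound \eqref{eq:mild-kernel-L2} (applied on a cube $Q\ni x$) shows that $y\mapsto G_{\alpha,k}(x-y)\sigma(y)$ lies in $L^2(D)$. Hence the Wiener integral \eqref{eq:mild-integral} is well defined as a centered complex Gaussian variable. Next, the isometry \eqref{eq:mild-isometry} together with \eqref{eq:mild-kernel-increment} and boundedness of $\sigma$ give
\[
\mathbb E|u(x,k)-u(z,k)|^2\le \|\sigma\|_{L^\infty}^2 C_\beta|x-z|^{2\beta},\qquad x,z\in Q,
\]
for every $\beta<2\alpha-3/2$. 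Because the increments are Gaussian (the real and imaginary parts are each real Gaussian), hypercontractivity upgrades this to $\mathbb E|u(x,k)-u(z,k)|^p\le C_{p,\beta}|x-z|^{p\beta}$ for all $p<\infty$. The Kolmogorov--Chentsov theorem in dimension three, applied with $p\beta>3$, then yields a continuous modification on each cube. It also gives $\mathbb E\|u\|_{C^{0,\gamma}(Q)}^p<\infty$ for every $\gamma<\beta-3/p$. Letting $\beta\uparrow 2\alpha-3/2$ and $p\to\infty$ (the bound for large $p$ implies it for small $p$) proves \eqref{eq:mild-holder} for every $\gamma<2\alpha-3/2$. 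Patching over a countable exhaustion by cubes gives a continuous field on $\mathbb R^3$. Uniqueness up to indistinguishability follows because two continuous modifications agree a.s. on $\mathbb Q^3$, and continuity extends this to all points.

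To identify $u$ with the distribution $G_{\alpha,k}*F$, we test against $\varphi\in C_c^\infty$ and apply the stochastic Fubini theorem:
\[
\int u(x,k)\varphi(x)\,dx=\int_D\Bigl(\int G_{\alpha,k}(x-y)\varphi(x)\,dx\Bigr)\sigma(y)\,dW_y=\langle F,\check G_{\alpha,k}*\varphi\rangle .
\]
Fubini is justified because $\int\!\int |G(x-y)|^2|\varphi(x)|\,dx\,dy<\infty$ by \eqref{eq:mild-kernel-L2}. Taking a countable dense set of test functions and using continuity in $\varphi$, this holds on one full-probability event. Outside $\operatorname{supp}\sigma$ the field coincides with the smooth modification \eqref{eq:mild-exterior-integral}, so the outgoing conditions \eqref{eq:mild-outgoing-class} are inherited from Proposition~\ref{prop:mild-exterior}.

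For the approximation statement, we partition $D$ into cubes $D_j$ of side $h$ and set $\dot W_h=\sum_j |D_j|^{-1}W(D_j)\mathbf 1_{D_j}$. This is an $L^2$ function, so $u_h=G_{\alpha,k}*(\sigma\dot W_h)$ is the outgoing solution of the deterministic problem with that source. We then write $u_h(x)=\int_D (P_h g_x)(y)\,dW_y$, where $g_x=G_{\alpha,k}(x-\cdot)$ and $P_h$ is the $L^2$ projection onto piecewise constants applied to $\sigma g_x$. This representation holds up to moving $\sigma$ inside the projection, which is an error controlled by $\|\sigma-P_h\sigma\|_\infty\to0$. The isometry then gives
\[
\mathbb E\|u-u_h\|_{L^2(Q)}^2=\int_Q\|(I-P_h)(\sigma g_x)\|_{L^2(D)}^2dx+o(1).
\]
For each $x$ the integrand tends to $0$, and it is dominated by $4\sup_x\|\sigma g_x\|^2$, so dominated convergence finishes the argument. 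A quantitative rate could instead be extracted from \eqref{eq:mild-kernel-increment} via translation averages.

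We expect the main obstacle to be making the identification and approximation steps rigorous simultaneously for all $x$ and all test functions. This involves the measure-theoretic bookkeeping of stochastic Fubini, and it also requires checking that $u_h$ genuinely lies in the uniqueness class of Proposition~\ref{prop:mild-exterior}. For the latter we would use the decay bounds \eqref{eq:mild-kernel-decay}--\eqref{eq:mild-kernel-radiation} with the compactly supported $L^2$ source $\sigma\dot W_h$.
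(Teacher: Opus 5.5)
Your proposal is correct and follows the paper's proof essentially step by step: Gaussian moment bounds from the increment estimate, Kolmogorov's theorem, uniqueness via a countable dense set, identification by stochastic Fubini, and the $L^2$ projection onto piecewise constants with dominated convergence. The differences are minor: you obtain the Hölder moment bound from the quantitative Kolmogorov theorem rather than the paper's $W^{s,q}\hookrightarrow C^{0,\gamma}$ embedding, and you use cube partitions rather than arbitrary ones. Also, with $u_h=G_{\alpha,k}*(\sigma\dot W_h)$ the identity $u_h(x)=\int_D P_h(\sigma g_x)\,dW_y$ is exact, so there is no error term from moving $\sigma$ inside the projection.
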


\begin{proof}
We first establish continuity of the stochastic integral in \eqref{eq:mild-integral}. By \eqref{eq:mild-kernel-L2}, its integrand belongs to $L^2(D)$ for every $x$, so $u(x,k)$ is a well-defined centered complex Gaussian random variable. Fix a bounded cube $Q$ and suppress $k$ in the notation. For $x,z\in Q$ and $0<\beta<2\alpha-3/2$, the white-noise isometry and \eqref{eq:mild-kernel-increment} give
\begin{equation}\label{eq:mild-stochastic-increment}
\begin{split}
\mathbb E|u(x)-u(z)|^2
&=\int_D|G_{\alpha,k}(x-y)-G_{\alpha,k}(z-y)|^2\sigma^2(y)\,dy\\
&\le C_\beta|x-z|^{2\beta}.
\end{split}
\end{equation}
Since the increment is Gaussian, for every $q\ge2$,
\[
\mathbb E|u(x)-u(z)|^q\le C_{q,\beta}|x-z|^{q\beta}.
\]
Given $0<\gamma<2\alpha-3/2$, choose $\gamma<\beta<2\alpha-3/2$ and then $q$ so large that $q(\beta-\gamma)>3$. Kolmogorov's continuity theorem gives a $\gamma$-H\"older modification on $Q$. A countable exhaustion by cubes, together with a countable sequence of exponents approaching $2\alpha-3/2$, gives a common continuous modification with every asserted local H\"older exponent.

The same estimate gives the moment bound \eqref{eq:mild-holder}. For a prescribed $p$, choose $q\ge\max\{2,p\}$ sufficiently large and $s$ such that $\gamma+3/q<s<\beta$. Tonelli's theorem yields
\[
\mathbb E\|u\|_{W^{s,q}(Q)}^q
\le C+C\int_Q\int_Q|x-z|^{q(\beta-s)-3}\,dx\,dz<\infty.
\]
The embedding $W^{s,q}(Q)\hookrightarrow C^{0,\gamma}(Q)$ and H\"older's inequality prove the claim. Uniqueness follows from the representation: two continuous fields satisfying \eqref{eq:mild-integral} for the same $W$ agree almost surely at each point of a countable dense set, and hence agree everywhere on a single event of probability one.

We next construct the mild solution by approximating the noise. Let $\{D_{n,j}\}_{j=1}^{N_n}$ be any sequence of finite measurable partitions of $D$ into sets of positive measure, with
\[
h_n:=\max_{1\le j\le N_n}\operatorname{diam}(D_{n,j})\rightarrow0.
\]
Define the piecewise constant random functions
\begin{equation}\label{eq:mild-noise-approximation}
\dot W_n(y)=\sum_{j=1}^{N_n}
\frac{W(D_{n,j})}{|D_{n,j}|}\mathbf 1_{D_{n,j}}(y),
\qquad W(D_{n,j})=\int_{D_{n,j}}dW_y.
\end{equation}
The isometry gives $\mathbb E|W(D_{n,j})|^2=|D_{n,j}|$, and consequently
\[
\mathbb E\|\sigma\dot W_n\|_{L^2(D)}^2
=\sum_{j=1}^{N_n}\frac{1}{|D_{n,j}|}
\int_{D_{n,j}}\sigma^2(y)\,dy<\infty.
\]
Thus each regularized source belongs to $L^2(D)$ almost surely and has compact support. Its outgoing Green potential is
\[
u_n(x)=\int_DG_{\alpha,k}(x-y)\sigma(y)\dot W_n(y)\,dy,
\]
which solves
\[
\bigl((-\Delta)^\alpha-k^{2\alpha}\bigr)u_n=\sigma\dot W_n
\quad\text{in }\mathbb R^3
\]
and satisfies the outgoing conditions in \eqref{eq:mild-outgoing-class}.

To prove convergence, let $P_n$ be the orthogonal projection in $L^2(D)$ onto the functions constant on each $D_{n,j}$. The condition $h_n\to0$ and uniform continuity imply $P_nh\to h$ for $h\in C(\overline D)$. Density and the bound $\|P_n\|\le1$ extend this convergence to every $h\in L^2(D)$. Writing
\[
h_x(y)=G_{\alpha,k}(x-y)\sigma(y),
\]
we have $u_n(x)=W(P_nh_x)$ and $u(x)=W(h_x)$. Hence
\[
\mathbb E|u_n(x)-u(x)|^2
=\|(P_n-I)h_x\|_{L^2(D)}^2\rightarrow0.
\]
The last quantity is bounded by $\|h_x\|_{L^2(D)}^2$, whose integral over $Q$ is finite by \eqref{eq:mild-kernel-L2}. Dominated convergence therefore gives
\begin{equation}\label{eq:mild-approximation-limit}
\mathbb E\|u_n-u\|_{L^2(Q)}^2\rightarrow0.
\end{equation}

It remains to identify this continuous field with the distributional solution. For $\varphi\in C_c^\infty(\mathbb R^3)$, the kernel bounds and stochastic Fubini give
\[
\int_{\mathbb R^3}u(x)\varphi(x)\,dx
=\int_D\sigma(y)
\left(\int_{\mathbb R^3}G_{\alpha,k}(x-y)\varphi(x)\,dx\right)dW_y.
\]
The right-hand side is the pairing of $G_{\alpha,k}*(\sigma\dot W)$ with $\varphi$. Taking a countable dense family of test functions first identifies the two distributions on one event of probability one. Proposition \ref{prop:mild-exterior} then gives the equation and the outgoing conditions for the continuous mild solution.
\end{proof}

\begin{remark}\label{rem:mild-threshold}
The restriction $\alpha>3/4$ is necessary for the pointwise Wiener integral at any point $x_0$ where $\sigma(x_0)\ne0$. Indeed, the local singularity \eqref{eq:mild-singularity} gives, for sufficiently small $\varepsilon>0$,
\[
\int_{|y-x_0|<\varepsilon}
|G_{\alpha,k}(x_0-y)\sigma(y)|^2\,dy
\ge C\int_0^\varepsilon r^{4\alpha-4}\,dr.
\]
The integral diverges for $\alpha\le3/4$, with logarithmic divergence at equality. At exterior points the kernel is smooth on the support of $\sigma$, so Proposition \ref{prop:mild-exterior} still provides the stochastic representation for every $0<\alpha<1$.
\end{remark}

The stochastic representation now gives the following correlation identities used in the inverse problem.

\begin{corollary}\label{cor:mild-correlation}
Let $0<\alpha<1$ and $x,z\notin\operatorname{supp}\sigma$. The outgoing field satisfies
\begin{align}
\mathbb E[u(x,k)\overline{u(z,k)}]
&=\int_DG_{\alpha,k}(x-y)\overline{G_{\alpha,k}(z-y)}\sigma^2(y)\,dy,
\label{eq:mild-covariance}\\
\mathbb E[u(x,k)u(z,k)]
&=\int_DG_{\alpha,k}(x-y)G_{\alpha,k}(z-y)\sigma^2(y)\,dy.
\label{eq:mild-relation}
\end{align}
If $\alpha>3/4$, both identities hold for all $x,z\in\mathbb R^3$.
\end{corollary}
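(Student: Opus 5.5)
The plan is to read both identities off the stochastic representation \eqref{eq:mild-exterior-integral} of Proposition~\ref{prop:mild-exterior}, and then apply the polarization of the white-noise isometry \eqref{eq:mild-isometry}. For $x\notin\operatorname{supp}\sigma$ set
\[
h_x(y)=G_{\alpha,k}(x-y)\sigma(y).
\]
Since $x$ has positive distance from $\operatorname{supp}\sigma$ and $G_{\alpha,k}$ is smooth away from the origin (Lemma~\ref{lem:mild-kernel}), $h_x$ is bounded with compact support, so $h_x\in L^2(D;\mathbb C)$. The smooth modification then satisfies $u(x,k)=W(h_x)$ almost surely, where $W(h)=\int_D h\,dW_y$ is the complex-linear extension of the real isonormal process. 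Because the modification agrees with the stochastic integral almost surely at each fixed point, moments computed from $W(h_x)$ are the moments of $u(x,k)$.

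Next I will record the two bilinear identities for $W$. For real $g_1,g_2\in L^2(D;\mathbb R)$, polarization of \eqref{eq:mild-isometry} gives $\mathbb E[W(g_1)W(g_2)]=\int_D g_1g_2\,dy$. Writing $h=h_1+ih_2$ and $\ell=\ell_1+i\ell_2$ and expanding by complex linearity yields
\[
\mathbb E[W(h)\overline{W(\ell)}]=\int_D h\,\overline{\ell}\,dy,\qquad
\mathbb E[W(h)W(\ell)]=\int_D h\,\ell\,dy .
\]
The second identity holds because $W$ is real and linear, so $\overline{W(\ell)}=W(\overline\ell)$, and the first then applies with $\overline\ell$ in place of $\ell$.

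With $h=h_x$ and $\ell=h_z$, and using that $\sigma$ is real so that $\sigma\overline\sigma=\sigma^2$, these give \eqref{eq:mild-covariance} and \eqref{eq:mild-relation} directly.

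For $\alpha>3/4$ and arbitrary $x,z\in\mathbb R^3$, I would instead use Theorem~\ref{thm:mild-continuous}, which states $u(x,k)=W(h_x)$ for every $x$. Here $h_x\in L^2(D)$ by \eqref{eq:mild-kernel-L2}, since $\sigma$ is bounded. The same computation then applies verbatim.

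The only point needing care is justifying that the pointwise values of the chosen modification coincide almost surely with the Wiener integrals. This follows from the construction in Proposition~\ref{prop:mild-exterior} and Theorem~\ref{thm:mild-continuous}, where the modification was built from the mean-square integrals. No real obstacle is expected.
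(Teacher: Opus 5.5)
Your proposal is correct and follows the same route as the paper: the paper applies the white-noise isometry to the stochastic representation, using that $W$ is real and extended complex-linearly. The paper's proof is a single line, while you spell out the details it leaves implicit: the polarization, the identity $\overline{W(\ell)}=W(\overline\ell)$, and the use of Theorem~\ref{thm:mild-continuous} to obtain $u(x,k)=W(h_x)$ at every point when $\alpha>3/4$.
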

\begin{proof}
Apply the white-noise isometry to the stochastic representation. Since $W$ is real and its action on complex integrands is defined by complex linearity, it gives both the covariance \eqref{eq:mild-covariance} and the nonconjugated second moment \eqref{eq:mild-relation}.
\end{proof}

\subsection{High-frequency resolvent estimates}\label{sec:resolvent}

We retain the high-frequency estimates separately from the fixed-frequency construction. The proofs of Lemma \ref{lem1} and Proposition \ref{free-frac} are included in the Appendix.

In three spatial dimensions, the strong Huygens principle gives the following estimates for the classical free resolvent (cf.~\cite{DZ,WXZ}). Throughout the paper, $a\lesssim b$ means $a\leq Cb$, where the constant is independent of the frequency and the data discrepancy.

\begin{lemma}\label{lem1}
For every $s\in\mathbb R$, the free resolvent
\[
R_0(\lambda)=(-\Delta-\lambda^2)^{-1}:H^s(\mathbb R^3)\to H^{s+m}(\mathbb R^3),
\qquad 0\leq m\leq2,
\]
is analytic for $\Im\lambda>0$. For $\chi\in C_c^\infty(\mathbb R^3)$, the truncated resolvent extends to an entire operator family and satisfies
\begin{equation}\label{res1}
\|\chi R_0(\lambda)\chi\|_{H^s\to H^{s+m}}
\lesssim(1+|\lambda|)^{m-1}e^{L(\Im\lambda)_-},
\end{equation}
where $(\Im\lambda)_-=\max\{-\Im\lambda,0\}$ and $L>\operatorname{diam}(\operatorname{supp}\chi)$.
\end{lemma}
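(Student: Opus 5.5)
The plan is to work with the explicit kernel of the free resolvent,
\[
R_0(\lambda)(x,y)=\frac{e^{i\lambda|x-y|}}{4\pi|x-y|}.
\]
For $\Im\lambda>0$ the operator $R_0(\lambda)$ is the Fourier multiplier $(|\xi|^2-\lambda^2)^{-1}$. Since $\lambda^2\notin[0,\infty)$, this symbol is smooth and bounded by $C_\lambda\langle\xi\rangle^{-2}$, so $R_0(\lambda)$ maps $H^s$ to $H^{s+m}$ for $0\le m\le2$. Analyticity in $\lambda$ follows because the symbol is holomorphic in $\lambda$ with locally uniform bounds. The kernel formula holds for $\Im\lambda>0$. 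For the truncated operator, the kernel $\chi(x)\frac{e^{i\lambda|x-y|}}{4\pi|x-y|}\chi(y)$ is entire in $\lambda$, since $|x-y|\le\operatorname{diam}\operatorname{supp}\chi$ on its support. This gives the entire extension, with $|e^{i\lambda|x-y|}|\le e^{d(\Im\lambda)_-}$, where $d=\operatorname{diam}(\operatorname{supp}\chi)$.

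For the estimate \eqref{res1}, I would first prove the case $s=0$, $m\in\{0,1,2\}$, and then interpolate in $m$. To pass to general $s$, commute derivatives with the cutoffs, using the translation invariance of $R_0$ (or duality together with interpolation). The main tool is the \emph{strong Huygens principle}. Write
\[
\chi R_0(\lambda)\chi f=\int_0^{L}e^{i\lambda t}\,\chi U(t)\chi f\,dt,
\qquad U(t)=\frac{\sin(t\sqrt{-\Delta})}{\sqrt{-\Delta}}.
\]
This identity holds for $\Im\lambda>0$. In three dimensions the kernel of $U(t)$ is supported on $|x-y|=t$, so $\chi U(t)\chi=0$ for $t>d$. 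Hence the integral truncates to $[0,L]$, and the formula continues to all $\lambda\in\mathbb C$. The bound $\|U(t)\|_{H^s\to H^{s+1}}\lesssim 1+t$ and $|e^{i\lambda t}|\le e^{t(\Im\lambda)_-}$ then give the case $m=1$:
\[
\|\chi R_0(\lambda)\chi\|_{H^s\to H^{s+1}}\lesssim e^{L(\Im\lambda)_-}.
\]

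For $m=0$, integrate by parts in $t$ once. Using $\partial_t e^{i\lambda t}=i\lambda e^{i\lambda t}$, we get
\[
\chi R_0\chi f=\frac{1}{i\lambda}\Bigl[e^{i\lambda t}\chi U(t)\chi f\Bigr]_0^L-\frac{1}{i\lambda}\int_0^L e^{i\lambda t}\chi\cos(t\sqrt{-\Delta})\chi f\,dt.
\]
The boundary terms vanish: at $t=0$ because $U(0)=0$, and at $t=L$ by Huygens. Since $\cos(t\sqrt{-\Delta})$ is bounded on $H^s$, this yields the factor $|\lambda|^{-1}$ for $|\lambda|\ge1$. For $|\lambda|\le1$ the $m=1$ bound already suffices.

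For $m=2$, use the identity
\[
\Delta\chi R_0\chi=-\chi^2-\lambda^2\chi R_0\chi+[\Delta,\chi]R_0\chi .
\]
The commutator $[\Delta,\chi]$ is a first-order operator. Replacing $\chi$ by a slightly larger cutoff $\tilde\chi$ with $\tilde\chi\chi=\chi$, the commutator term is controlled by the $m=1$ estimate for $\tilde\chi R_0\chi$. The term $\lambda^2\chi R_0\chi$ is controlled by the $m=0$ estimate, giving the factor $(1+|\lambda|)$. The enlarged cutoff is why $L>\operatorname{diam}(\operatorname{supp}\chi)$ is required strictly. Complex interpolation between $m=0$, $1$ and $2$ then gives all $0\le m\le2$.

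The main obstacle is handling general $s$ uniformly. Because of the cutoffs, $H^s\to H^{s+m}$ bounds do not reduce trivially to the case $s=0$. I would treat this with the wave-group representation above: $U(t)$ and $\cos(t\sqrt{-\Delta})$ commute with $\langle D\rangle^s$, and multiplication by $\chi$ is bounded on every $H^s$. So each step works verbatim in $H^s$, with constants depending on $s$ and $\chi$ but not on $\lambda$.
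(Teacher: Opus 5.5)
Your proposal is correct and follows the paper's proof essentially step by step. The sine-propagator representation, truncated to $[0,L]$ by the strong Huygens principle, gives the $m=1$ bound; one integration by parts in $t$ gives $m=0$; the commutator identity with an enlarged cutoff gives $m=2$; interpolation covers $0\le m\le 2$; and general $s$ is handled directly through the $H^s$ multiplier bounds on $U(t)$ and $\cos(t\sqrt{-\Delta})$, exactly as in the paper.
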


We now consider the fractional resolvent. Complex powers of $\lambda$ are taken on the right half-plane, with $-\pi/2<\arg\lambda<\pi/2$. For $0<\theta_0<\pi/2$, set
\[
S_{\theta_0}=\{\lambda\neq0:|\arg\lambda|\leq\theta_0\}.
\]
The outgoing fractional resolvent is related to the classical one by the fractional power formula \cite[(5.28)]{MS}:
\begin{equation}\label{formula2}
\begin{split}
R(\lambda)
={}&\frac{\lambda^{2-2\alpha}}{\alpha}R_0(\lambda)+\frac{\sin(\pi\alpha)}{\pi}\int_0^\infty
\frac{\gamma^\alpha(\gamma-\Delta)^{-1}}
{\gamma^{2\alpha}-2\gamma^\alpha\lambda^{2\alpha}\cos(\pi\alpha)+\lambda^{4\alpha}}\,d\gamma.
\end{split}
\end{equation}
 The integral is understood through its scalar Fourier multiplier, estimated in the Appendix. Initially this is the spectral resolvent for $\Re\lambda,\Im\lambda>0$. After spatial truncation, it defines its outgoing continuation to $\Re\lambda>0$.

\begin{proposition}\label{free-frac}
Let $0<\alpha<1$, $s\in\mathbb R$, and $\rho\in C_c^\infty(\mathbb R^3)$.
For every $0\leq l\leq2\alpha$, the outgoing truncated resolvent
\[
\rho R(\lambda)\rho:H^s(\mathbb R^3)\rightarrow H^{s+l}(\mathbb R^3)
\]
extends holomorphically to $\Re\lambda>0$. For every $\theta_0<\pi/2$ and
$T>\operatorname{diam}(\operatorname{supp}\rho)$, it satisfies
\begin{equation}\label{bound_2}
\|\rho R(\lambda)\rho\|_{H^s\to H^{s+l}}
\leq C|\lambda|^{1+l-2\alpha}e^{T(\Im\lambda)_-},
\qquad \lambda\in S_{\theta_0},\quad |\lambda|\geq1.
\end{equation}
The constant can depend on $\alpha,s,l,\rho,\theta_0,T$.
\end{proposition}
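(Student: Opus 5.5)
We split the resolvent formula \eqref{formula2} into its classical part and its integral part, and estimate each after truncation by $\rho$.

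\emph{Classical part.} The first term is $\lambda^{2-2\alpha}\alpha^{-1}\rho R_0(\lambda)\rho$. By Lemma~\ref{lem1}, $\rho R_0(\lambda)\rho$ is entire, so this term is holomorphic on $\Re\lambda>0$, where $\lambda^{2-2\alpha}$ is holomorphic. For $0\le l\le 2\alpha\le 2$ we apply \eqref{res1} with $m=l$ and any $L$ with $\operatorname{diam}(\operatorname{supp}\rho)<L\le T$. This gives
\[
|\lambda|^{2-2\alpha}(1+|\lambda|)^{l-1}e^{T(\Im\lambda)_-}\lesssim|\lambda|^{1+l-2\alpha}e^{T(\Im\lambda)_-},\qquad |\lambda|\ge1,
\]
which is exactly the bound in \eqref{bound_2}.

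\emph{Integral part.} The second term is the Fourier multiplier
\[
n_\lambda(\xi)=\frac{\sin(\pi\alpha)}{\pi}\int_0^\infty\frac{\gamma^\alpha\,d\gamma}{(\gamma+|\xi|^2)\,q_\lambda(\gamma)},\qquad q_\lambda(\gamma)=\gamma^{2\alpha}-2\gamma^\alpha\lambda^{2\alpha}\cos(\pi\alpha)+\lambda^{4\alpha}.
\]
The denominator factors as $q_\lambda(\gamma)=(\gamma^\alpha-\lambda^{2\alpha}e^{i\pi\alpha})(\gamma^\alpha-\lambda^{2\alpha}e^{-i\pi\alpha})$.

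The first step is to show that $q_\lambda$ does not vanish for $\gamma>0$ and $\lambda$ in the closed sector $S_{\theta_0}$. Since $\arg(\lambda^{2\alpha}e^{\pm i\pi\alpha})=2\alpha\arg\lambda\pm\pi\alpha$, a zero would require $2\alpha|\arg\lambda|\ge\pi\alpha$, that is $|\arg\lambda|\ge\pi/2$, which is excluded. On $S_{\theta_0}$ the angular gap is at least $\alpha(\pi-2\theta_0)>0$. This should give the lower bound
\[
|q_\lambda(\gamma)|\ge c_{\theta_0}\bigl(\gamma^{\alpha}+|\lambda|^{2\alpha}\bigr)^2 .
\]
With this bound, $n_\lambda(\xi)$ is holomorphic in $\lambda$ on $\Re\lambda>0$ by dominated convergence, locally uniformly.

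\emph{Multiplier bound.} Next we bound $\langle\xi\rangle^l|n_\lambda(\xi)|$ uniformly in $\xi$. We use $(\gamma+|\xi|^2)^{-1}\le\langle\xi\rangle^{-l}\,\gamma^{l/2-1}$, valid up to constants for $0\le l\le2$ and $\gamma\gtrsim1$; the small-$\gamma$ region is handled separately, as discussed below. Rescaling $\gamma=|\lambda|^2\tau$ then gives
\[
\langle\xi\rangle^l|n_\lambda(\xi)|\lesssim|\lambda|^{l-2\alpha}\int_0^\infty\frac{\tau^{\alpha+l/2-1}}{(\tau^\alpha+1)^2}\,d\tau .
\]
This integral converges at $\tau=\infty$ because $l<2\alpha+\ldots$; precisely, we need $\alpha+l/2-2\alpha<0$, i.e.\ $l<2\alpha$. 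At the endpoint $l=2\alpha$ we get a logarithmic loss, which is absorbed by the spare factor $|\lambda|$ in the target bound $|\lambda|^{1+l-2\alpha}$. Alternatively, at the endpoint we can use $|\xi|^{2\alpha}$ against $(\gamma+|\xi|^2)$ together with the extra $\gamma$-decay, or interpolate between $l=0$ and the classical estimate.

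For small $\gamma$, the factor $(\gamma+|\xi|^2)^{-1}$ is singular near $\xi=0$. We avoid this by keeping $\langle\xi\rangle^l\le C(1+|\xi|^2)$ bounded for $|\xi|\le1$. The $\gamma$-integral near $0$ is $\int_0^1\gamma^{\alpha}(\gamma+|\xi|^2)^{-1}\,d\gamma\,|\lambda|^{-4\alpha}$, which is bounded uniformly in $\xi$ since $\alpha>0$.

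Altogether the integral part is bounded from $H^s$ to $H^{s+l}$ by $C|\lambda|^{l-2\alpha}$ on the whole space, hence also after truncation by $\rho$. This is $\le C|\lambda|^{1+l-2\alpha}$, and the exponential weight in \eqref{bound_2} is at least $1$.

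\emph{Main obstacle.} The hardest step is the uniform lower bound on $|q_\lambda|$ across the whole sector together with the endpoint $l=2\alpha$. I would first try the factorization and the angular-gap argument described above. If the endpoint integral fails to converge, I would use the slack $|\lambda|^{1}$ in the target exponent to allow a crude logarithmic bound.
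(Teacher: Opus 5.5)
Your decomposition, the factorization of the denominator with its angular-gap lower bound, and your treatment of the classical term via Lemma~\ref{lem1} all match the paper. For $0\le l<2\alpha$ your multiplier estimate also works. The gap is the endpoint $l=2\alpha$. It is part of the statement, and it is the case used later, both for the energy estimate and for holomorphy into $H^{s+2\alpha}$.

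\textbf{Why your endpoint fix fails.} With the pointwise inequality $(\gamma+|\xi|^2)^{-1}\lesssim\langle\xi\rangle^{-l}\gamma^{l/2-1}$, your rescaled integral at $l=2\alpha$ behaves like $\int^\infty\tau^{-1}\,d\tau=+\infty$.
\begin{itemize}
\item This is not a $\log|\lambda|$ loss. It is an outright divergence, independent of $\lambda$.
\item It reflects a failure of uniformity in $\xi$. Your inequality throws away the natural cutoff at $\gamma\approx|\xi|^2$. Using it with any $l'<2\alpha$ only gives $\langle\xi\rangle^{2\alpha}|n_\lambda(\xi)|\lesssim C_{l'}\langle\xi\rangle^{2\alpha-l'}|\lambda|^{l'-2\alpha}$, which is unbounded in $\xi$.
\item So the spare factor $|\lambda|$ in $|\lambda|^{1+l-2\alpha}$ cannot absorb anything, because the operator norm requires a finite $\sup_\xi$.
\item Interpolation cannot reach the endpoint either. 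The symbol of the integral term is $(|\xi|^{2\alpha}-\lambda^{2\alpha})^{-1}$ minus an $O(|\xi|^{-2})$ classical part, so it behaves like $|\xi|^{-2\alpha}$ at infinity. There is therefore no estimate with $l>2\alpha$ to interpolate against, and the $m=2$ bound of Lemma~\ref{lem1} applies only to the $R_0$ term.
\end{itemize}

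\textbf{The missing step.} You need the sharp symbol bound $|n_\lambda(\xi)|\le C(|\lambda|^2+|\xi|^2)^{-\alpha}$ on $S_{\theta_0}$.
\begin{itemize}
\item For $|\xi|\le|\lambda|$, use $(\gamma+|\xi|^2)^{-1}\le\gamma^{-1}$ together with your lower bound on $q_\lambda$. This gives $C|\lambda|^{-2\alpha}$.
\item For $|\xi|\ge|\lambda|$, split the $\gamma$-integral at $\gamma=|\xi|^2$.
  \begin{itemize}
  \item On $\gamma\le|\xi|^2$, use $(\gamma+|\xi|^2)^{-1}\le|\xi|^{-2}$ and $|q_\lambda|\gtrsim\gamma^{2\alpha}$. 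Then $\int_0^{|\xi|^2}\gamma^{-\alpha}\,d\gamma\lesssim|\xi|^{2-2\alpha}$, which uses $\alpha<1$.
  \item On $\gamma\ge|\xi|^2$, use $\int_{|\xi|^2}^\infty\gamma^{-\alpha-1}\,d\gamma\lesssim|\xi|^{-2\alpha}$.
  \end{itemize}
\end{itemize}
This is exactly the paper's argument, written in the scaled variable $Q=|\xi|/|\lambda|$. It gives $\langle\xi\rangle^{2\alpha}|n_\lambda(\xi)|\le C$ for $|\lambda|\ge1$, and hence the bound $C|\lambda|^{l-2\alpha}$ for all $0\le l\le2\alpha$ at once.

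\textbf{Holomorphy.} The same locally uniform weighted bound is what you need for holomorphy in the operator norm of $\mathcal L(H^s,H^{s+2\alpha})$. Your dominated-convergence remark only gives holomorphy of $n_\lambda(\xi)$ for each fixed $\xi$. That does not by itself make the operator family holomorphic. The paper combines the uniform bound on $(1+|\xi|^2)^{\alpha}n_\lambda$ with scalar Cauchy estimates to get the operator-norm statement.
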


For real $k>0$, the kernel in \eqref{formula2} agrees with \eqref{eq:mild-green-decomposition}. Thus its action on the random source gives the same outgoing solution as Proposition \ref{prop:mild-exterior}. Combining that proposition with \eqref{bound_2} gives the following energy estimate.

\begin{theorem}
Let $0<\alpha<1$ and $\eta>0$. For every $k>0$, the direct problem
has a unique solution in the outgoing class of
Proposition~\ref{prop:mild-exterior}. Almost surely, it belongs to
$H^t_{\rm loc}(\mathbb R^3)$ for every $t<2\alpha-3/2$.
Moreover, for each fixed $\chi\in C_c^\infty(\mathbb R^3)$ and $k\geq1$,
\[
\|\chi u\|_{H^{-3/2-\eta+l}}
\leq C k^{1-2\alpha+l}\|f\|_{H^{-3/2-\eta}},
\qquad 0\leq l\leq2\alpha.
\]
\end{theorem}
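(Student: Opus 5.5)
The theorem combines the fixed-frequency construction with the high-frequency resolvent bound, and I will prove its three assertions in that order.

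\emph{Existence, uniqueness and regularity.} For every $k>0$, existence and uniqueness in the outgoing class, together with almost-sure membership in $H^t_{\rm loc}(\mathbb R^3)$ for $t<2\alpha-3/2$, are exactly Proposition~\ref{prop:mild-exterior}. Nothing new is needed here.

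\emph{Reducing the energy estimate to the truncated resolvent.} The solution is $u=G_{\alpha,k}*f$ with $f=\sigma\dot W$, and $f$ is supported in $\operatorname{supp}\sigma$.

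1. Choose $\rho\in C_c^\infty(\mathbb R^3)$ with $\rho\equiv1$ on a neighborhood of $\operatorname{supp}\sigma\cup\operatorname{supp}\chi$. Then $f=\rho f$, and $\chi u=\chi\rho u$.
2. By the remark preceding the theorem, for real $k$ the kernel of \eqref{formula2} is \eqref{eq:mild-green-decomposition}, so $u=R(k)f$ pathwise. This identification is checked on the Fourier side, since $\widehat u=m^+_{\alpha,k}\widehat f$ for the compactly supported realization $F$ of $f$.
3. Combining the two previous steps gives $\chi u=\chi\,\rho R(k)\rho\, f$.
4. Multiplication by $\chi$ is bounded on every $H^{s}$, with a constant independent of $k$.
5. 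Apply Proposition~\ref{free-frac} with $s=-3/2-\eta$ and $\lambda=k$. Here $k$ is real and lies in $S_{\theta_0}$, so $(\Im\lambda)_-=0$ and the exponential factor equals $1$. For $k\ge1$ and $0\le l\le 2\alpha$ this gives
\[
\|\chi u\|_{H^{-3/2-\eta+l}}\le C\|\rho R(k)\rho f\|_{H^{-3/2-\eta+l}}\le Ck^{1+l-2\alpha}\|f\|_{H^{-3/2-\eta}}.
\]
The norm $\|f\|_{H^{-3/2-\eta}}$ is finite almost surely by the computation at the start of the proof of Proposition~\ref{prop:mild-exterior}.

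\emph{Main obstacle.} The delicate step is step 2: showing that the operator whose continuation Proposition~\ref{free-frac} estimates, evaluated on the positive real axis, really is the outgoing limit $(p_{\alpha,k}-i0)^{-1}$ and not the incoming one.

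To settle this I would check on multipliers that, for $\Re\lambda,\Im\lambda>0$, the symbol of \eqref{formula2} equals $(|\xi|^{2\alpha}-\lambda^{2\alpha})^{-1}$. Since $\arg\lambda^{2\alpha}\in(0,\pi\alpha)$, this is the form $|\xi|^{2\alpha}-k^{2\alpha}-i\varepsilon$ in the limit $\lambda\to k$ from the first quadrant. Hence the boundary value of the holomorphic family $\rho R(\lambda)\rho$ at real $k$ coincides with $\rho\,\mathcal F^{-1}m^+_{\alpha,k}\rho$, tested on Schwartz functions and then extended by density in $H^s$.

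The integral term in \eqref{formula2} causes no difficulty in this limit, because its denominator is bounded below uniformly near real $\lambda$. The remaining term, $\lambda^{2-2\alpha}R_0(\lambda)/\alpha$, converges to the outgoing classical resolvent by Lemma~\ref{lem1}.
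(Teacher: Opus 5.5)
Your proposal is correct and follows the paper's proof: choose $\rho\equiv1$ near $\operatorname{supp}\chi\cup\operatorname{supp}\sigma$, write $\chi u=\chi\rho R(k)\rho f$, apply Proposition~\ref{free-frac} at real $\lambda=k\ge1$, and take existence and uniqueness from Proposition~\ref{prop:mild-exterior}. The only difference is cosmetic: the paper gets the almost-sure $H^t_{\rm loc}$ regularity from the estimate with $l=2\alpha$ and a countable sequence $\eta\downarrow0$, while you quote it directly from Proposition~\ref{prop:mild-exterior}. Your check that the real-axis value of the continued resolvent is the outgoing one just spells out the remark the paper makes before the theorem.
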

\begin{proof}
Choose $\rho=1$ on a neighborhood of
$\operatorname{supp}\chi\cup\operatorname{supp}\sigma$ and apply
Proposition~\ref{free-frac}. Existence and uniqueness follow from
Proposition~\ref{prop:mild-exterior}. The choice $l=2\alpha$, followed
by a countable choice of $\eta\downarrow0$, gives the stated local
regularity on one probability-one event.
\end{proof}

\begin{remark}
The Sobolev gain in Proposition \ref{free-frac} is available throughout
$0\le l\le2\alpha$. Its high-frequency factor decays only when
$1-2\alpha+l<0$. In particular, the endpoint estimate has a factor $k$;
it does not improve the threshold for a pointwise continuous mild solution.
\end{remark}

\section{Inverse random source problem}\label{ip}

In this section, we first prove single-frequency uniqueness by fractional unique continuation and then obtain multifrequency stability using the high-frequency expansion and the X-ray transform.  Throughout the inverse analysis, $0<\alpha<1$ is fixed. Fix an integer $s\geq4$ and $F>0$, and let
\[
\mathcal C_F=\{v\in W^{s,\infty}(\mathbb R^3):
\|v\|_{W^{s,\infty}}\leq F,\quad\operatorname{supp}v\subset\subset B_r\}.
\]
 We assume that the variance $\sigma^2$ belongs to $\mathcal C_F$. For two sources, write
\[
\rho_j=\sigma_j^2,\qquad h=\rho_1-\rho_2\in\mathcal C_{2F}.
\]
The amplitudes $\sigma_j$ retain the smoothness assumed in the Introduction. 

Denote the classical Green function by
\[
G_k^{(0)}(x,z)=\frac{e^{ik|x-z|}}{4\pi|x-z|},
\]
and define the beam transform
\[
Bg(x,\theta)=\int_{-\infty}^0g(x+t\theta)\,dt,\qquad\theta\in\mathbb S^2.
\]
The next lemma gives the geometric optics expansion used below.

\begin{lemma}\label{he}
Let $g\in\mathcal C_F$, $r<r_1<R$, and $z\in\partial B_R$. For $k\geq1$ and $x\in B_{r_1}$,
\[
R_0(k)\big(gG_k^{(0)}(\cdot,z)\big)(x)
=\frac{i}{2k}G_k^{(0)}(x,z)Bg(x,\theta)+P_k(x,z),
\qquad \theta=\frac{x-z}{|x-z|},
\]
where
\[
\sup_{z\in\partial B_R}\|P_k(\cdot,z)\|_{L^2(B_{r_1})}\leq Ck^{-2}.
\]
The constant depends only on $F,r,r_1,R$.
\end{lemma}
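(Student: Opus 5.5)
Write $v=R_0(k)\bigl(gG_k^{(0)}(\cdot,z)\bigr)$. It is the outgoing solution of $(-\Delta-k^2)v=gG_k^{(0)}(\cdot,z)$. Since $z\in\partial B_R$ and $\operatorname{supp}g\subset B_r$ with $r<R$, the right-hand side is smooth. Its $W^{s,\infty}$ norms are bounded uniformly in $z$, up to powers of $k$ coming from the phase $e^{ik|x-z|}$.

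The plan is a WKB ansatz. Write $\phi(x)=|x-z|$; on $B_{r_1}$ it is smooth with $|\nabla\phi|=1$, since $z$ stays at distance at least $R-r_1>0$. Take
\[
v_0(x)=\frac{i}{2k}G_k^{(0)}(x,z)Bg(x,\theta(x)),\qquad \theta(x)=\frac{x-z}{|x-z|}.
\]
Writing $G_k^{(0)}=e^{ik\phi}/(4\pi\phi)$, the function $a(x)=Bg(x,\theta(x))$ is constant along the rays from $z$ outside $\operatorname{supp}g$. Crucially, $\nabla\phi\cdot\nabla a=\theta\cdot\nabla_x[Bg(x,\theta)]=g(x)$, because $\theta(x)$ is constant along each ray and $\partial_t\int_{-\infty}^0g(x+(t+\tau)\theta)\,d\tau=g(x+t\theta)$.

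Next, the direct computation. For $w=e^{ik\phi}\phi^{-1}a$ one gets
\[
(-\Delta-k^2)w=-2ik\,e^{ik\phi}\phi^{-1}\nabla\phi\cdot\nabla a-e^{ik\phi}\phi^{-1}\Delta a .
\]
Here the eikonal term and the transport term for the amplitude $\phi^{-1}$ cancel, since $\phi^{-1}e^{ik\phi}$ is itself a Helmholtz solution away from $z$. Multiplying by $i/(8\pi k)$ gives
\[
(-\Delta-k^2)v_0=gG_k^{(0)}(\cdot,z)-\frac{i}{2k}G_k^{(0)}(\cdot,z)\Delta a .
\]
The error term is the remainder source, of size $O(k^{-1})$ uniformly. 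The function $a$ is smooth on a neighborhood of $\overline{B_{r_1}}$ with bounds depending on $\|g\|_{W^{2,\infty}}\le F$; this is where $s\ge4$ gives room to spare.

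However, $v_0$ is not compactly supported: $a$ is nonzero on the shadow region beyond $\operatorname{supp}g$. Since $v_0$ behaves like an outgoing spherical wave times a function that is constant along rays, I would cut off. Pick $\chi\in C_c^\infty$ with $\chi=1$ on $B_{r_1}$ and support in $B_{r_2}$, where $r_1<r_2<R$. Then
\[
\chi v_0 = R_0(k)\bigl[(-\Delta-k^2)(\chi v_0)\bigr],
\]
since $\chi v_0$ is compactly supported and hence trivially outgoing. Subtracting, $v-\chi v_0=R_0(k)q$ with
\[
q=\chi\frac{i}{2k}G_k^{(0)}\Delta a+[\Delta,\chi]v_0 .
\]
Both pieces are supported in $B_{r_2}$, so we can insert a cutoff $\tilde\chi$ and use Lemma~\ref{lem1} with $s=0$, $m=0$, $\lambda=k$ real. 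This gives $\|\tilde\chi R_0(k)\tilde\chi\|_{L^2\to L^2}\lesssim k^{-1}$.

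The first piece of $q$ has $L^2$ norm $O(k^{-1})$, so its contribution is $O(k^{-2})$. The commutator piece is the main obstacle. It contains $\nabla\chi\cdot\nabla v_0$, and $\nabla v_0$ is $O(1)$, not $O(k^{-1})$, because differentiating the phase produces a factor of $k$. The naive bound is therefore only $O(k^{-1})$. To fix this, I would exploit that the commutator is supported in the annulus $B_{r_2}\setminus B_{r_1}$, away from where we evaluate. One route is to write $[\Delta,\chi]v_0=e^{ik\phi}b_k$ with a $k$-uniformly smooth amplitude $b_k=O(1)$ and a non-stationary phase relative to $B_{r_1}$. Another, cleaner, route is to choose $\chi$ depending only on $\phi$ along rays, i.e. a cutoff in the radial variable $|x-z|$ near the far side. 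Then $\nabla\chi\cdot\nabla\phi\ne0$, the outgoing wave simply continues through the annulus, and the outgoing solution beyond the cutoff equals $v_0$ itself there. The alternative is to avoid cutoffs altogether by verifying directly that $v_0$ satisfies the Sommerfeld condition. Indeed, $a$ tends to the ray integral $\int_{\mathbb R}g(z+t\theta)\,dt$, which is constant at infinity along rays, and $\Delta a=O(|x-z|^{-2})$. The remainder source $G_k^{(0)}\Delta a/k$ then decays like $|x|^{-3}$, and one can apply the classical resolvent with weighted estimates, or better, use the explicit kernel $G_k^{(0)}$ with the weighted $L^2$ bound of order $k^{-1}$. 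I would try this direct outgoing verification first, so that $P_k=R_0(k)\bigl(\frac{i}{2k}G_k^{(0)}\Delta a\bigr)$ exactly. The $O(k^{-2})$ bound on $B_{r_1}$ then follows from a limiting-absorption weighted estimate $\|\langle x\rangle^{-\delta}R_0(k)\langle x\rangle^{-\delta}\|\lesssim k^{-1}$ with $\delta>1/2$. Uniformity in $z\in\partial B_R$ comes from compactness of the sphere and the uniform bounds on $a$.
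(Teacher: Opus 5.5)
Your ansatz, the transport identity $\nabla\phi\cdot\nabla a=g$, and the treatment of the compactly supported $O(k^{-1})$ source via Lemma~\ref{lem1} are fine. Your formula for $(-\Delta-k^2)w$ drops the cross term $2\phi^{-2}e^{ik\phi}\nabla\phi\cdot\nabla a$, but this only adds the harmless term $\frac{i}{k\phi}G_k^{(0)}g$. The gap is the step that makes the lemma nontrivial, upgrading $O(k^{-1})$ to $O(k^{-2})$: it is never carried out, and the route you would try first fails.

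Your global amplitude $a(x)=Bg(x,\theta(x))$ integrates the backward ray through $z$ and beyond. So for $|x-z|<R-r$ it equals $\int_0^\infty g(z-s\theta)\,ds$, a function of the direction $\theta$ alone. It is zero for $\theta$ pointing into $B_R$ and in general nonzero for outward $\theta$. Consequently:
\begin{itemize}
\item $a$ is discontinuous at $z$;
\item $G_k^{(0)}\Delta a=e^{ik|x-z|}\,\Delta_{\mathbb S^2}a/(4\pi|x-z|^3)$ is not even locally integrable near $z$;
\item the product $a\,\delta_z$ coming from $(-\Delta-k^2)G_k^{(0)}$ is undefined.
\end{itemize}
So the identity $P_k=R_0(k)\bigl(\frac{i}{2k}G_k^{(0)}\Delta a\bigr)$ is false, and no weighted resolvent bound can be applied to it. Indeed, for $x$ just outside $B_R$ beyond $z$, your $v_0$ has size $k^{-1}$. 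The true $v$ is much smaller there, because the stationary set $[x,z]$ of $|x-y|+|y-z|$ misses $\operatorname{supp}g$.

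Your first route also rests on a false claim. For $x\in B_{r_1}$ the segment $[x,z]$ always crosses the annulus $B_{r_2}\setminus B_{r_1}$ carrying $[\Delta,\chi]v_0$. The phase $|x-y|+|y-z|$ is stationary precisely on that segment, so it is not non-stationary there. That route could be saved only by observing that $a$ vanishes near $[x,z]\setminus B_{r_1}$ (by convexity of $B_{r_1}$), with a separation that is uniform by compactness; you do not make this observation. The second route is too vague to check.

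The missing idea, which is what the paper uses, is the amplitude of a wave emanating from $z$: $A_z(x)=\int_0^{|x-z|}g(z+t\theta_z(x))\,dt$. It still satisfies $\theta_z\cdot\nabla A_z=g$. It coincides with $Bg(x,\theta_z(x))$ on $B_{r_1}$, since there the backward ray beyond $z$ lies outside $\overline B_R$. And it vanishes for $|x-z|<R-r$, so $G_k^{(0)}A_z$ has no singularity at $z$.

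The paper then cuts off \emph{far away}, with $\chi=1$ on $B_{3R}\ni z$ and $\operatorname{supp}\chi\subset B_{4R}$. All $O(k^{-1})$ terms are then compactly supported and give $O(k^{-2})$ via Lemma~\ref{lem1}. The only $O(1)$ term, $G_k^{(0)}A_z\,\theta_z\cdot\nabla\chi$, lives in $|y|\ge 3R$, far from every segment $[x,z]$. There $\nabla_y(|x-y|+|y-z|)$ is bounded below uniformly, and two integrations by parts give $O(k^{-2})$.

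With $A_z$ in place of your $a$, your no-cutoff route would also work and give a genuinely different proof. $G_k^{(0)}A_z$ is outgoing, and the remainder source is bounded by $Ck^{-1}\langle x\rangle^{-3}$. A weighted limiting absorption bound $\|\langle x\rangle^{-\delta}R_0(k)\langle x\rangle^{-\delta}\|_{L^2\to L^2}\lesssim k^{-1}$ with $1/2<\delta<3/2$ then gives the $O(k^{-2})$ bound on $B_{r_1}$. This trades the oscillatory-integral argument for an external resolvent estimate not contained in Lemma~\ref{lem1}.
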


 \begin{proof}
We adapt the transport construction in \cite{Stefanov}. By smooth approximation with uniformly bounded $W^{4,\infty}$ norms and support in $B_r$, it suffices to consider smooth $g$. Put $d_z(x)=|x-z|$, $\theta_z(x)=(x-z)/d_z(x)$, and
\[
A_z(x)=\int_0^{d_z(x)}g(z+t\theta_z(x))\,dt.
\]
This amplitude vanishes when $d_z(x)<R-r$, satisfies $\theta_z\cdot\nabla A_z=g$, and has uniformly bounded derivatives through order four on bounded sets. If $x\in B_{r_1}$, the ray $z+t\theta_z(x)$ for $t<0$ lies outside $B_r$, so $A_z(x)=Bg(x,\theta_z(x))$.

Choose $\chi\in C_c^\infty(B_{4R})$ with $\chi=1$ on $B_{3R}$. Set $\Phi=G_k^{(0)}(\cdot,z)$ and $v=R_0(k)(g\Phi)$. Direct differentiation, using $\nabla\Phi=(ik-d_z^{-1})\theta_z\Phi$, gives
\[
(-\Delta-k^2)\left(\frac{i}{2k}\Phi\chi A_z-v\right)
=E_k+\Phi A_z\,\theta_z\cdot\nabla\chi,
\]
where
\[
E_k=-\frac{i}{2k}\Phi\Delta(\chi A_z)
+\frac{i}{kd_z}\Phi\,\theta_z\cdot\nabla(\chi A_z),
\qquad \|E_k\|_{L^2}\leq Ck^{-1}.
\]
All these amplitudes vanish near $z$. The expression in parentheses is outgoing, so the classical resolvent and Lemma \ref{lem1} give an $O(k^{-2})$ bound for $R_0(k)E_k$ in $L^2(B_{r_1})$.

The remaining term equals
\[
\frac{1}{16\pi^2}\int
e^{ik(|x-y|+|y-z|)}
\frac{A_z(y)\theta_z(y)\cdot\nabla\chi(y)}
{|x-y||y-z|}\,dy.
\]
On its support, $|y|\geq3R$, whereas $x,z\in\overline B_R$. Hence the gradient in $y$ of the phase is uniformly nonzero. Two integrations by parts with
$\nabla_y\varphi\cdot\nabla_y/(ik|\nabla_y\varphi|^2)$, where $\varphi=|x-y|+|y-z|$, yield a uniform $O(k^{-2})$ bound. This proves the result.
\end{proof}

We next introduce the statistical data and the geometry used for stability. Let
\[
\Omega_e=B_{r_1}\setminus\overline B_r,\qquad
Y=L^2(\Omega_e\times\partial B_R,dx\,dS(z)
).
\]
By Corollary \ref{cor:mild-correlation}, the exterior correlation data are given by
\begin{equation}\label{eq:data-map}
\mathcal D_\rho(x,z;k)=\mathbb E[u(x,k)u(z,k)]
=\int_{B_r}G_k(x,y)G_k(z,y)\rho(y)\,dy,\qquad \rho=\sigma^2.
\end{equation}
 The identity \eqref{eq:data-map} defines $\mathcal D_\rho(x,z;k)$ for all $x,z\in\mathbb R^3\setminus\overline B_r$ and $k>0$.
For stability we restrict these data to $\Omega_e\times\partial B_R$.
They are ensemble second moments of the complex field, not intensity-only observations. The integral defines a linear map $\mathcal D_g$ for signed functions $g$ as well. In particular,
\[
\delta\mathcal D=\mathcal D_{\rho_1}-\mathcal D_{\rho_2}=\mathcal D_h.
\]

The nonlocality of the fractional Laplacian yields uniqueness at a single frequency. In fact, one receiver may be fixed, and the other receiver only
needs to range over an arbitrary nonempty open subset of the exterior.

\begin{theorem}\label{thm:stat-unique}
Let $0<\alpha<1$, $k>0$, and let $\rho_j=\sigma_j^2$, where
$\sigma_j\in C_c^\infty(B_r)$, $j=1,2$. Fix
$z_0\in\mathbb R^3\setminus\overline B_r$ and a nonempty open set
$U\subset\mathbb R^3\setminus\overline B_r$. If
\[
\mathcal D_{\rho_1}(x,z_0;k)=\mathcal D_{\rho_2}(x,z_0;k),
\qquad x\in U,
\]
then $\rho_1=\rho_2$. If both amplitudes are nonnegative, then
$\sigma_1=\sigma_2$ as well.
\end{theorem}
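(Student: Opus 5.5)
We set $h=\rho_1-\rho_2\in C_c^\infty(B_r)$ and define the auxiliary function
\[
w(x)=\int_{B_r}G_{\alpha,k}(x-y)G_{\alpha,k}(z_0-y)h(y)\,dy = \bigl(G_{\alpha,k}*q\bigr)(x),\qquad q(y)=G_{\alpha,k}(z_0-y)h(y).
\]
Since $z_0\notin\overline B_r$, Lemma~\ref{lem:mild-kernel} shows that $G_{\alpha,k}(z_0-\cdot)$ is smooth on a neighborhood of $\overline B_r$. Hence $q\in C_c^\infty(B_r)$. By \eqref{eq:data-map}, the hypothesis says exactly that $w=\mathcal D_h(\cdot,z_0;k)=0$ on $U$.

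The argument proceeds in three steps.

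\textbf{Step 1.} Because $\widehat w=m^+_{\alpha,k}\widehat q$, we have $((-\Delta)^\alpha-k^{2\alpha})w=q$ in the distributional sense of Proposition~\ref{prop:mild-exterior}. In particular this equation holds on $U$, where $q=0$.

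\textbf{Step 2.} Since $w=0$ on $U$, both $w$ and $(-\Delta)^\alpha w=q+k^{2\alpha}w$ vanish on $U$. We then apply the unique continuation property \cite[Theorem 1.2]{gsu}, which states that if $v\in H^{t}(\mathbb R^3)$ for some $t\in\mathbb R$ and $v=(-\Delta)^\alpha v=0$ on a nonempty open set, then $v\equiv0$. This would give $w\equiv0$ on $\mathbb R^3$, and then $q=((-\Delta)^\alpha-k^{2\alpha})w=0$.

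\textbf{Step 3.} We now localize to a ball $B_{r'}$ with $r<r'<|z_0|$. The Green function $G_{\alpha,k}(z_0-y)$ has the expansion \eqref{eq:mild-green-decomposition} and is real-analytic in $y\ne z_0$. A nonzero real-analytic function cannot vanish on an open set, so its zero set in $B_r$ has empty interior. Since $h$ is continuous and $q=G_{\alpha,k}(z_0-\cdot)\,h=0$, it follows that $h=0$ on the dense complement of that zero set, hence $h=0$ everywhere. For the final claim, if $\sigma_j\ge0$, then $\sigma_j=\sqrt{\rho_j}$ is determined by $\rho_j$.

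The main obstacle is Step 2, because $w$ is only outgoing and decays like $|x|^{-1}$, so it does not lie in any $H^t(\mathbb R^3)$. We will remove the global-Sobolev requirement by splitting off the classical part.

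First choose a cutoff $\chi$ in frequency near the sphere $|\xi|=k$. Write $w=w_1+w_2$ with $\widehat w_1=\chi\widehat w$. Because $\widehat q$ is Schwartz, $w_2=\mathcal F^{-1}((1-\chi)m^+_{\alpha,k}\widehat q)$ lies in $H^\infty(\mathbb R^3)$.

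The piece $w_1$ is the problem. By the argument in the uniqueness part of Proposition~\ref{prop:mild-exterior}, $w_1=R_0(k)\tilde q$ with $\tilde q$ Schwartz, so $w_1$ satisfies a classical Helmholtz equation. This alone does not put $w_1$ in a global Sobolev space. Instead, we will check that the proof of \cite[Theorem 1.2]{gsu} only needs a Caffarelli--Silvestre extension of $w$, which exists for tempered $w$ with $|x|^{-1}$ decay since $|x|^{-1}\in L^1(\mathbb R^3,\langle x\rangle^{-3-2\alpha}dx)$.

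If that check fails, the fallback is to multiply by a weight to land in a weighted space covered by \cite{gsu}. Alternatively, we can use the representation \eqref{eq:mild-green-decomposition}. On $U$, the vanishing of $w$ and of its fractional Laplacian forces the nonlocal integral $\int q(y)|x-y|^{-3-2\alpha}dy$-type quantity to vanish, and we conclude by analyticity.
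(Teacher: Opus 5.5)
Your Step 2 is the heart of the argument, and it is not carried out. You correctly note that $w=G_{\alpha,k}*q$ lies in no $H^t(\mathbb R^3)$, because its Fourier transform $m^+_{\alpha,k}\widehat q$ carries a singular layer on $|\xi|=k$. So \cite[Theorem 1.2]{gsu} does not apply as stated, and none of your three repairs closes the gap. The weight fallback fails outright: $(-\Delta)^\alpha$ does not commute with multiplication by a weight $\mu$, so $(-\Delta)^\alpha(\mu w)$ need not vanish on $U$ even though $(-\Delta)^\alpha w$ does. The analyticity fallback is too vague to check. Via the singular-integral form, the vanishing of $w$ and $(-\Delta)^\alpha w$ on $U$ only gives $\int w(y)|x-y|^{-3-2\alpha}\,dy=0$ for $x\in U$. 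That is a condition on $w$, not on $q$, and you do not say how $q=0$ would follow. The extension route is plausible, since the Carleman argument behind \cite{gsu} is local, but as written it is only a promise about the internals of another paper. You would have to identify your Fourier-multiplier $(-\Delta)^\alpha w$, for a field outside every $H^t$, with the weighted Neumann trace of the Caffarelli--Silvestre extension. You would also have to establish the local weighted $H^1$ regularity that the boundary Carleman estimate needs. The proposal therefore stops exactly where the difficulty begins.

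The missing idea is much simpler, and you were one line from it when you observed that $(-\Delta-k^2)w_1$ is Schwartz. Apply the \emph{local} operator $-\Delta-k^2$ to all of $w$ instead of splitting, and set $\tilde w=(-\Delta-k^2)w$. Then $\widehat{\tilde w}=q_{\alpha,k}\widehat q$, where $q_{\alpha,k}(\xi)=(|\xi|^2-k^2)/(|\xi|^{2\alpha}-k^{2\alpha})$, extended by $k^{2-2\alpha}/\alpha$ on $|\xi|=k$. This multiplier is continuous, strictly positive and $O(\langle\xi\rangle^{2-2\alpha})$, so $\tilde w\in H^t(\mathbb R^3)$ for every $t$. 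By locality $\tilde w=0$ on $U$. Also $((-\Delta)^\alpha-k^{2\alpha})\tilde w=(-\Delta-k^2)q$ vanishes on $U$, so $(-\Delta)^\alpha\tilde w=0$ there as well. Now \cite[Theorem 1.2]{gsu} applies directly to the real and imaginary parts, giving $\tilde w=0$, and positivity of $q_{\alpha,k}$ gives $\widehat q=0$; this is the paper's argument. Your Step 3 is an acceptable alternative to the paper's, which uses $\operatorname{Im}G_{\alpha,k}(z_0-y)=k^{2-2\alpha}\sin(k|z_0-y|)/(4\pi\alpha|z_0-y|)$ to place the zero set inside the null set of spheres $|z_0-y|=n\pi/k$. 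You should add why $G_{\alpha,k}(z_0-\cdot)$ is not identically zero (that same imaginary part shows it) and why $J_{\alpha,k}(|\cdot|)$ is real-analytic away from the origin.
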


\begin{proof}
Set $h=\rho_1-\rho_2$ and
\[
F(y)=h(y)G_{\alpha,k}(z_0-y),\qquad
v=G_{\alpha,k}*F.
\]
Since $z_0$ lies outside the source region, $F\in C_c^\infty(B_r;\mathbb C)$. By \eqref{eq:data-map}, $v=0$ in $U$.
The outgoing field $v$ need not belong to $L^2(\mathbb R^3)$, so we first set $w=(-\Delta-k^2)v$. Cancellation of the spectral singularity gives
\begin{equation}\label{eq:single-frequency-multiplier}
\widehat w(\xi)=q_{\alpha,k}(\xi)\widehat F(\xi),\qquad
q_{\alpha,k}(\xi)=
\begin{cases}
\displaystyle\frac{|\xi|^2-k^2}{|\xi|^{2\alpha}-k^{2\alpha}},
&|\xi|\ne k,\\[8pt]
\displaystyle\frac{k^{2-2\alpha}}{\alpha},&|\xi|=k.
\end{cases}
\end{equation}
Indeed, near $|\xi|=k$ this follows from $p_{\alpha,k}(p_{\alpha,k}-i0)^{-1}=1$, while near the origin all factors are locally bounded functions. The multiplier $q_{\alpha,k}$ is strictly positive and satisfies
$|q_{\alpha,k}(\xi)|\leq C(1+|\xi|)^{2-2\alpha}$. Consequently, $w\in H^t(\mathbb R^3)$ for every $t\in\mathbb R$.
Moreover, \eqref{eq:single-frequency-multiplier} implies
\[
\big(( -\Delta)^\alpha-k^{2\alpha}\big)w=(-\Delta-k^2)F.
\]
Since $v=0$ and $F=0$ in $U$, we have $w=(-\Delta)^\alpha w=0$ there. The unique continuation property of the fractional Laplacian \cite[Theorem 1.2]{gsu}, applied to the real and imaginary parts, yields $w=0$ in $\mathbb R^3$. Since $q_{\alpha,k}>0$, it follows that $F=0$.

By \eqref{eq:mild-green-decomposition}--\eqref{eq:mild-J},
\[
\operatorname{Im}G_{\alpha,k}(z_0-y)
=\frac{k^{2-2\alpha}\sin(k|z_0-y|)}
{4\pi\alpha|z_0-y|}.
\]
Thus the zero set of $G_{\alpha,k}(z_0-\cdot)$ is contained in the countable union of spheres $|z_0-y|=n\pi/k$, $n\geq1$, which has Lebesgue measure zero. The identity $F=hG_{\alpha,k}(z_0-\cdot)=0$
therefore gives that $h=0$, completing the proof.
\end{proof}

\begin{remark}
The preceding theorem uses the correlation data $\mathcal D_\rho(\cdot,z_0;k)$ on an exterior open set. Multifrequency data on the surrounding shell and
receiver sphere will be used below to obtain quantitative stability.
\end{remark}

 We now derive the high-frequency estimate used for stability. Write $a_k=k^{2-2\alpha}/\alpha$. The Green function decomposition in Section \ref{sec:green} gives
\begin{equation}\label{gae}
G_k(x,y)=a_kG_k^{(0)}(x,y)+J_{\alpha,k}(|x-y|),
\qquad
|J_{\alpha,k}(d)|\leq Ck^{-4\alpha}d^{-3-2\alpha}.
\end{equation}
The bound follows directly from \eqref{eq:mild-J} and holds for every $d>0$.

The estimates below are uniform on $\mathcal C_F$, even if the support approaches $\partial B_r$. Indeed, the zero extension and Taylor's formula give
\[
|g(y)|\leq CF|x-y|^4,\qquad x\notin B_r,\quad y\in B_r,
\]
and consequently
\begin{equation}\label{eq:offdiag-weight}
\sup_{x\in\Omega_e}\int_{B_r}
\frac{|g(y)|}{|x-y|^{3+2\alpha}}\,dy\leq CF.
\end{equation}
Since $|z-y|\geq R-r$ for $z\in\partial B_R$, the mixed terms in the product of the two kernels in \eqref{eq:data-map} are $O(k^{2-6\alpha})$ and the product of the two $J$ terms is $O(k^{-8\alpha})$, uniformly in $x,z$. Applying Lemma \ref{he} to the classical term yields
\begin{equation}\label{crucial_1}
\mathcal D_g(x,z;k)
=\frac{i\,k^{3-4\alpha}}{2\alpha^2}G_k^{(0)}(x,z)Bg(x,\theta)+Q_{g,k}(x,z),
\end{equation}
where $\theta=(x-z)/|x-z|$ and
\[
\|Q_{g,k}\|_Y
\leq C\bigl(k^{2-4\alpha}+k^{2-6\alpha}+k^{-8\alpha}\bigr)
\leq Ck^{2-4\alpha},\qquad k\geq1.
\]
The three terms are, respectively, the classical geometric optics remainder, the mixed kernel terms, and the product of the two fractional corrections. After multiplication by $k^{4\alpha-3}$, they are bounded by $Ck^{-1}$, $Ck^{-1-2\alpha}$, and $Ck^{-3-4\alpha}$. These bounds hold for every $0<\alpha<1$. Here and below the same estimates apply to $h\in\mathcal C_{2F}$ with adjusted constants. As $R-r_1\leq|x-z|\leq R+r_1$, division by the known factor $G_k^{(0)}(x,z)$ gives
\begin{equation}\label{estimate}
\int_{\partial B_R}\int_{\Omega_e}
\left|Bg\left(x,\frac{x-z}{|x-z|}\right)\right|^2dx\,dS(z)
\lesssim\|k^{4\alpha-3}\mathcal D_g(k)\|_Y^2+k^{-2}.
\end{equation}

 We now turn to stability. For $g\in\mathcal C_F$, define its X-ray transform by $Xg(x,\theta)=\int_{\mathbb R}g(x+t\theta)\,dt$. Parametrize lines by $(p,\theta)$ with $p\in\theta^\perp$. For $|p|<r$, the total length of the line in $\Omega_e$ is
\[
\ell(p)=2\left(\sqrt{r_1^2-|p|^2}-\sqrt{r^2-|p|^2}\right).
\]
It is bounded above and below by positive constants depending only on $r,r_1$. Since lines with $|p|\geq r$ do not meet the support of $g$, Fubini's theorem gives
\begin{equation}\label{LP}
\int_{\Omega_e\times\mathbb S^2}|Xg(x,\theta)|^2\,dx\,d\theta
=\int_{\mathbb S^2}\int_{\substack{p\in\theta^\perp\\|p|<r}}
\ell(p)|Xg(p,\theta)|^2\,dp\,d\theta.
\end{equation}
The Fourier slice identity (see \cite{SU}) and Plancherel's theorem yield
\[
\int_{\mathbb S^2}\int_{\theta^\perp}|Xg(p,\theta)|^2\,dp\,d\theta
=c\int_{\mathbb R^3}|\xi|^{-1}|\widehat g(\xi)|^2\,d\xi
\geq c'\|g\|_{H^{-1/2}}^2.
\]
Together with $Xg(x,\theta)=Bg(x,\theta)+Bg(x,-\theta)$, this proves
\begin{equation}\label{estimate_2}
\|g\|_{H^{-1/2}}^2
\lesssim\|Xg\|_{L^2(\Omega_e\times\mathbb S^2)}^2
\lesssim\|Bg\|_{L^2(\Omega_e\times\mathbb S^2)}^2.
\end{equation}
For the change of variables $\theta=(x-z)/|x-z|$, the surface measure satisfies
\[
d\theta=\frac{R-x\cdot\nu(z)}{|x-z|^3}\,dS(z),\qquad \nu(z)=z/R.
\]
Its density has positive upper and lower bounds because $|x|\leq r_1<R$. Combining this observation with \eqref{estimate} and \eqref{estimate_2} gives the key estimate
\begin{equation}\label{crucial}
\|h\|_{H^{-1/2}(\mathbb R^3)}^2
\lesssim\|k^{4\alpha-3}\delta\mathcal D(k)\|_Y^2+k^{-2},
\qquad k\geq1.
\end{equation}

To continue the data beyond the measured interval, we use a sectorial analytic continuation argument; see \cite{WXZ_1} for its application to inverse random source problems. Since our measured interval is bounded away from zero, we record the following version with an interior slit. Fix $0<K_0<K_*$ with $K_*>1$, set $a=K_0/2$, and define
\begin{equation}\label{regionR}
\mathcal S_a=\{\zeta\in\mathbb C:|\arg(\zeta-a)|<\pi/4\}.
\end{equation}

\begin{lemma}\label{ac}
Let $K\ge K_*$ and suppose that $p$ is holomorphic in $\mathcal S_a$, with $|p|\le1$ there. If $|p(k)|\le\delta$ for $k\in[K_0,K]$, where $0<\delta<1$, then
\begin{equation}\label{p}
 |p(k)|\le\delta^{\,c(K/k)^2},\qquad k>K,
\end{equation}
where $c>0$ depends only on $K_0$ and $K_*$.
\end{lemma}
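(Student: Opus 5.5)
We plan to follow the standard harmonic-measure argument used in \cite{WXZ_1}, adapted to the shifted sector $\mathcal S_a$. The proof has three steps: a conformal map to the unit disk, the two-constants theorem, and an estimate of the harmonic measure of the measured interval.

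\emph{Step 1: normalize the domain.} First translate the sector by $\zeta\mapsto\zeta-a$, so that $\mathcal S_a$ becomes the standard sector $\{|\arg w|<\pi/4\}$. The interval $[K_0,K]$ becomes $[a,K-a]$, which starts at the fixed positive point $a=K_0/2$. Then apply $w\mapsto w^2$, which maps the sector onto the right half-plane. Finally apply a Möbius map onto the unit disk $\mathbb D$, sending a convenient interior point to $0$. Under these maps the real segment is sent to a slit $I_K$ inside the disk.

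\emph{Step 2: two-constants theorem.} Since $\log|p|$ is subharmonic with $\log|p|\le 0$ on the domain and $\log|p|\le\log\delta$ on the slit, the two-constants theorem applies on the domain with the slit removed. It gives
\[
\log|p(k)|\le \omega(k)\log\delta,
\]
where $\omega(k)$ is the harmonic measure of the slit evaluated at $k$. Because $\log\delta<0$, it suffices to prove the lower bound $\omega(k)\ge c(K/k)^2$ for $k>K$.

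\emph{Step 3: bound the harmonic measure (the main obstacle).} Set $\mu(\zeta)=\omega(\zeta)$. We will not compute the slit harmonic measure exactly. Instead we compare it with an explicit harmonic minorant, namely the harmonic measure of the same segment relative to the half-plane image.

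In the half-plane picture, $w^2$ sends $[a,K-a]$ to $[a^2,(K-a)^2]$ on the positive real axis, and the point $k$ goes to $(k-a)^2$. Removing a slit from the half-plane only increases harmonic measure, relative to the corresponding boundary interval, by the maximum principle: the slit lies in the interior, and the function equal to 1 on the slit dominates. This comparison needs care. The alternative is to replace the slit by a subsegment $[(K/2)^2,(K-a)^2]$ and compare with the harmonic measure of a disk or half-disk of radius comparable to $K^2$ centered on the real axis.

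Cleaner still: consider $\Omega=\{\Re\eta>0\}\setminus[K_0^2/4\cdot c,\ (K-a)^2]$, and use the explicit function
\[
\phi(\eta)=\Re\frac{(K-a)^2}{\eta}\cdot c_1.
\]
This is harmonic and positive in the right half-plane, vanishes on the imaginary axis, and is at most $O(1)$ on the slit after a suitable choice of $c_1$. On the outer part it decays like $(K/k)^2$ at $\eta=(k-a)^2$. Comparing $\omega$ with $\phi$ by the maximum principle yields $\omega(k)\ge c(K-a)^2/(k-a)^2\ge c(K/k)^2$. Here we use $K\ge K_*>1$ and $a<K_0/2$, so that the constants depend only on $K_0$ and $K_*$.

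The delicate point is the boundedness of the comparison near the slit's left endpoint. There $\Re$ of the inverse is large, so $\phi$ must be truncated, for example by taking $\min(\phi,1)$, which remains superharmonic, or by restricting to the subsegment near $(K-a)^2$. Checking that the comparison remains valid with the correct inequality direction is what I expect to require the most care.
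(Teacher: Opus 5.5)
Your Steps 1 and 2 are fine and match the paper's reduction (translate, square, then apply the two-constants theorem in the slit half-plane; the extra M\"obius map to the disk is harmless but unnecessary). The gap is Step 3, which carries the whole content of the lemma. Your comparison function does not work. Rescale $\eta=(K-a)^2w$. The domain becomes $\mathbb H\setminus[b^2,1]$ with $b=a/(K-a)$, and $\phi=c_1\Re(1/w)$.

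\begin{itemize}
\item This $\phi$ is a multiple of the Poisson kernel of $\mathbb H$ with pole at the boundary point $w=0$. It tends to $+\infty$ as $w\to0^+$ along the real axis, while $\omega\to0$ there. So $\omega\ge\phi$ fails near the origin for every choice of $c_1$.
\item On the slit, $\sup_{[b^2,1]}\phi=c_1b^{-2}=c_1(K-a)^2/a^2$. Keeping $\phi\le1$ there therefore forces $c_1\lesssim K^{-2}$, which destroys the $(K/k)^2$ gain.
\item Your repair $\min(\phi,1)$ is superharmonic. The maximum principle yields $\omega\ge\psi$ only for subharmonic $\psi$ lying below $\omega$ on the boundary. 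Here the conclusion is actually false: $\min(\phi,1)=1$ on $(0,c_1]$, while $\omega(w)\to0$ as $w\to0$.
\item Restricting to a subsegment near the right endpoint does not help either. The singularity of $\phi$ sits at the origin, not on the slit.
\end{itemize}

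There is also a small slip at the end. For $k>K$ one has $(K-a)/(k-a)<K/k$, so the last step needs a constant, e.g.\ $(K-a)/(k-a)\ge(1-a/K_*)K/k$.

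The missing idea is a bounded harmonic minorant that vanishes on all of $i\mathbb R$, including $0$, yet still decays exactly like $1/|w|$, with constants uniform in $K$. The paper gets this in two moves.

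\begin{itemize}
\item After rescaling, the slit $[b^2,1]$ always contains the fixed segment $[B,1]$, where $B=b_*^2$ and $b_*=(K_0-a)/(K_*-a)$.
\item It then uses the Green potential $P(w)=\int_B^1\log\left|\frac{w+s}{w-s}\right|ds$. This function is harmonic off $[B,1]$, continuous across it, and zero on $i\mathbb R$ and at infinity. Hence it is bounded by some $M_0$ independent of $K$, and the maximum principle gives $\omega\ge P/M_0$.
\item The inequality $\log\frac{1+q}{1-q}\ge2q$ then gives $P(w)\ge(1-B^2)/w$ for real $w>1$.
\end{itemize}

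Replacing the Poisson kernel at $0$ by Green functions with poles spread along the slit is exactly what removes both of your difficulties: the blow-up at the origin and the $K$-dependence on the slit. An alternative would be to invert $w\mapsto1/w$ and use a Hopf-type (boundary Harnack) bound for the fixed-slit harmonic measure near $0$. Some such quantitative argument has to be supplied, though, and your proposal does not provide one.
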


\begin{proof}
Write $t=(\zeta-a)/(K-a)$ and $b=(K_0-a)/(K-a)$. The sector becomes $\mathcal S_0$, and the measured interval becomes $[b,1]$, where
\[
 0<b\le b_*:=\frac{K_0-a}{K_*-a}<1.
\]
The conformal map $w=t^2$ sends $\mathcal S_0$ onto the right half-plane $\mathbb H=\{\Re w>0\}$ and the slit onto $[b^2,1]$. Let $\omega_b$ be the harmonic measure of this slit in $\mathbb H\setminus[b^2,1]$.

Put $B=b_*^2$ and consider the Green potential
\[
 P(w)=\int_B^1\log\left|\frac{w+s}{w-s}\right|\,ds.
\]
The function $P$ is nonnegative and harmonic in $\mathbb H\setminus[B,1]$, vanishes on the imaginary axis and at infinity, and extends continuously across $[B,1]$ because its logarithmic singularities are integrable. Consequently, $M_0:=\sup_{\mathbb H}P$ is finite and positive. Since $[B,1]\subset[b^2,1]$, the maximum principle gives
\[
 \omega_b(w)\ge M_0^{-1}P(w),
 \qquad w\in\mathbb H\setminus[b^2,1].
\]
For real $w>1$, the inequality $\log((1+q)/(1-q))\ge2q$, $0\le q<1$, yields
\[
 P(w)\ge\frac2w\int_B^1s\,ds=\frac{1-B^2}{w}.
\]
The two-constants theorem applied in the slit domain therefore gives
\[
 |p(k)|\le\delta^{\,c_0((K-a)/(k-a))^2},\qquad k>K.
\]
Finally,
\[
 \frac{K-a}{k-a}\ge\left(1-\frac a{K_*}\right)\frac Kk,
\]
which proves the assertion. All comparison constants depend only on the fixed quantities $K_0$ and $K_*$.
\end{proof}

Figure~\ref{fig:frequency-regions} shows the sector and the measured interval.

\begin{figure}[H]
\centering
\begin{tikzpicture}[x=1.05cm,y=1.05cm,font=\small,>=stealth]
  \fill[black!4] (0,-1.85) rectangle (9.2,1.85);
  \fill[black!9] (1.2,0) -- (3.05,1.85) -- (9.2,1.85)
    -- (9.2,-1.85) -- (3.05,-1.85) -- cycle;
  \draw[dashed,black] (1.2,0) -- (3.05,1.85);
  \draw[dashed,black] (1.2,0) -- (3.05,-1.85);
  \draw[->] (-0.35,0) -- (9.55,0) node[right] {$\Re\zeta$};
  \draw[->] (0,-2.0) -- (0,2.1) node[above] {$\Im\zeta$};
  \draw (1.85,0) arc[start angle=0,end angle=45,radius=0.65];
  \node at (2.08,0.39) {$\pi/4$};
  \node[black] at (5.3,1.1) {$\mathcal S_a$};
  \node at (7.35,1.53) {$\Re\zeta>0$};
  \node[below left] at (0,0) {$0$};
  \fill (1.2,0) circle (1.3pt);
  \node[below] at (1.2,-0.09) {$a$};
  \draw[line width=2.2pt,black] (2.7,0) -- (6.3,0);
  \foreach \x/\lab in {2.7/K_0,4.0/K_*,6.3/K}{
    \fill (\x,0) circle (1.5pt);
    \node[below] at (\x,-0.09) {$\lab$};
  }
  \draw[fill=white] (8.2,0) circle (2pt);
  \node[below] at (8.2,-0.09) {$k>K$};
  \draw[decorate,decoration={brace,mirror,amplitude=4pt}]
    (2.7,-0.83) -- (6.3,-0.83)
    node[midway,below=7pt] {measured interval};
\end{tikzpicture}
\caption{The translated sector with vertex $a=K_0/2$ and the interior
slit $[K_0,K]$. The map $w=((\zeta-a)/(K-a))^2$ sends the sector
to a right half-plane and the slit to $[b^2,1]$.}
\label{fig:frequency-regions}
\end{figure}

\begin{theorem}\label{main}
Let $\rho_j=\sigma_j^2\in\mathcal C_F$, $0<\alpha<1$, and $K\ge K_*$. Define
\begin{equation}\label{eq:discrepancy}
 \varepsilon^2=\sup_{k\in[K_0,K]}
 \|k^{4\alpha-3}(\mathcal D_{\rho_1}(k)-\mathcal D_{\rho_2}(k))\|_Y^2.
\end{equation}
There is a constant $C>0$, depending only on $\alpha,F,s,r,r_1,R,K_0,K_*$, such that, for $0<\varepsilon<e^{-1}$,
\begin{equation}\label{stability}
 \|\rho_1-\rho_2\|_{H^{-1/2}(\mathbb R^3)}^2
 \le C\left[\varepsilon^2+
 \min\left\{K^{-2},\frac1{K^{4/3}|\ln\varepsilon|^{2/3}}\right\}\right].
\end{equation}
The constant is independent of $K$ and $\varepsilon$.
\end{theorem}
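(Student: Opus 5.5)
The argument combines three ingredients already available in the paper: the key estimate \eqref{crucial}, which is valid at every $k\ge1$; Lemma~\ref{ac}, which transports smallness from $[K_0,K]$ to higher frequencies; and a case split depending on whether the measured band alone is already good enough.

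\emph{Step 1: the $K^{-2}$ branch.} Apply \eqref{crucial} directly at $k=K$. Since $K\ge K_*>1$, we have
\[
\|\rho_1-\rho_2\|_{H^{-1/2}}^2\lesssim \varepsilon^2+K^{-2}.
\]
This already covers the case in which the minimum in \eqref{stability} equals $K^{-2}$.

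\emph{Step 2: analytic continuation of the data.} Set
\[
I(\zeta)=\int_{\partial B_R}\int_{\Omega_e}\zeta^{8\alpha-6}\,\mathcal D_h(x,z;\zeta)\,\overline{\mathcal D_h(x,z;\bar\zeta)}\,dx\,dS(z),
\]
so that $I(k)=\|k^{4\alpha-3}\delta\mathcal D(k)\|_Y^2$ for real $k$.

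\begin{itemize}
\item \emph{Holomorphy.} $\mathcal D_h(x,z;\zeta)$ is holomorphic on $\Re\zeta>0$. The reason is that $G_\zeta(x,y)$ is given by \eqref{eq:mild-green-decomposition}--\eqref{eq:mild-J} with $k$ replaced by $\zeta$. For $x\in\Omega_e$ the kernel is singular only on the diagonal, and the bound \eqref{eq:offdiag-weight} handles that singularity.
\item \emph{Growth bound.} For $\zeta\in\mathcal S_a$, use $|e^{i\zeta d}|\le e^{d|\Im\zeta|}$ with $d\le R+r$, together with the denominator bound in \eqref{eq:mild-J}, which persists in a sector of half-angle less than $\pi/2$. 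Alternatively, Proposition~\ref{free-frac} gives the same kind of bound. Either way,
\[
|I(\zeta)|\le C|\zeta|^{m}e^{c_1|\Im\zeta|}.
\]
Inside $\mathcal S_a$ we have $|\Im\zeta|\le|\zeta-a|$, so this becomes $|I(\zeta)|\le C e^{c_2|\zeta|}$.
\item \emph{Normalization.} Put $p(\zeta)=I(\zeta)e^{-(c_2+1)\zeta}/M$. The exponential weight does not control $e^{c_2|\zeta|}$ by $\Re\zeta$ alone. However, $\Re\zeta\ge|\zeta|/\sqrt2$ up to a shift in $\mathcal S_a$, so one uses the weight $e^{-c_3\zeta}$ with $c_3=\sqrt2\,c_2+1$. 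This gives $|p|\le1$ in $\mathcal S_a$, while $|p(k)|\le\varepsilon^2$ on $[K_0,K]$.
\end{itemize}

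Lemma~\ref{ac} then yields
\[
I(k)\le C e^{c_3k}\,\varepsilon^{2c(K/k)^2},\qquad k>K.
\]

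\emph{Step 3: the logarithmic branch.} Insert this bound into \eqref{crucial} at a frequency $k>K$ still to be chosen:
\[
\|h\|^2_{H^{-1/2}}\lesssim \exp\bigl(c_3k-2c(K/k)^2|\ln\varepsilon|\bigr)+k^{-2}.
\]
Choose
\[
k=c_4\,K^{2/3}|\ln\varepsilon|^{1/3}.
\]
Then $c_3k$ and $(K/k)^2|\ln\varepsilon|$ are of the same order $K^{2/3}|\ln\varepsilon|^{1/3}$. With $c_4$ small, the exponent is at most $-c_5K^{2/3}|\ln\varepsilon|^{1/3}$. Using $e^{-t}\lesssim t^{-2}$, the exponential term is bounded by $C/(K^{4/3}|\ln\varepsilon|^{2/3})$, and so is $k^{-2}$.

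This choice is admissible only when $k>K$, that is, when $K^{-1/3}|\ln\varepsilon|^{1/3}$ is large. That is precisely the regime in which $K^{-4/3}|\ln\varepsilon|^{-2/3}<K^{-2}$. In the opposite regime the minimum is $K^{-2}$ and Step 1 applies. Taking the better of the two branches gives \eqref{stability}.

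\emph{Expected main obstacle.} The delicate point is Step 2: proving a uniform exponential bound for the continued fractional data in the sector. The bound must be uniform over $x\in\Omega_e$, including near $\partial B_r$ where the kernel is singular, and it must have the correct growth rate. In particular, the sector must avoid the zeros of the denominator $t^{2\alpha}-2\zeta^{2\alpha}t^\alpha\cos\pi\alpha+\zeta^{4\alpha}$. For $|\arg\zeta|<\pi/4$ the roots $t^\alpha=\zeta^{2\alpha}e^{\pm i\pi\alpha}$ have argument $2\alpha\arg\zeta\pm\pi\alpha$. These avoid the positive real axis provided $|2\arg\zeta|<\pi$, which holds in $\mathcal S_a$. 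This lower bound is the first thing to check.
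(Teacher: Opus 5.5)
Your proposal is correct and follows essentially the same route as the paper. Both arguments use the same holomorphic bilinear quantity $I(\zeta)=\zeta^{8\alpha-6}\int\mathcal D_h(\zeta)\overline{\mathcal D_h(\bar\zeta)}$, an exponential normalization based on the sector geometry, Lemma~\ref{ac}, the choice $k\sim(K^2|\ln\varepsilon|)^{1/3}$, and a case split against the direct estimate at $k=K$.

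The only slip is your claim that admissibility $k>K$ (i.e.\ $|\ln\varepsilon|>c_4^{-3}K$) is \emph{precisely} the regime $|\ln\varepsilon|>K$ where the logarithmic term is the minimum. In the intermediate range $K<|\ln\varepsilon|\le c_4^{-3}K$ one still has $K^{-2}\le c_4^{-2}K^{-4/3}|\ln\varepsilon|^{-2/3}$. So Step~1 covers that range, which is exactly how the paper handles it via $E/K\le\eta^{-3}$.
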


\begin{proof}
Let $h=\rho_1-\rho_2$. All complex powers below use the branch in the right half-plane. The sector $\mathcal S_a$ is contained in $\{|\arg\zeta|<\pi/4\}$, and $|\zeta|\ge a$. The kernel formula \eqref{formula2} and the denominator estimate in the Appendix give
\[
 |G_\zeta(x,y)|
 \le C\left(
 \frac{(1+|\zeta|)^{2-2\alpha}e^{|\Im\zeta||x-y|}}{|x-y|}
 +\frac{(1+|\zeta|)^{-4\alpha}}{|x-y|^{3+2\alpha}}
 \right),\qquad \zeta\in\mathcal S_a.
\]
Using \eqref{eq:offdiag-weight}, the separation $|z-y|\ge R-r$, and the boundedness of the source and observation sets, we obtain
\[
 \|\mathcal D_h(\zeta)\|_Y
 \le C(1+|\zeta|)^{4-4\alpha}e^{T|\Im\zeta|}
\]
for a fixed $T>0$. The same bounds on compact subsets of the sector, followed by the scalar Cauchy formula, justify differentiation under the integrals and show that $\mathcal D_h$ is $Y$-valued holomorphic. Therefore
\[
 \mathcal F_h(\zeta)=\zeta^{4\alpha-3}\mathcal D_h(\zeta)
\]
is holomorphic and satisfies
\[
 \|\mathcal F_h(\zeta)\|_Y\le C(1+|\zeta|)e^{T|\Im\zeta|}.
\]
Since $\mathcal S_a$ is invariant under complex conjugation, the reflected function
$\mathcal F_h^\#(\zeta)=\overline{\mathcal F_h(\overline\zeta)}$ is holomorphic there. Set
\[
 I(\zeta)=\int_{\Omega_e\times\partial B_R}
 \mathcal F_h(x,z;\zeta)\mathcal F_h^\#(x,z;\zeta)\,dx\,dS(z).
\]
The bilinear product is holomorphic, with
\[
 I(k)=\|k^{4\alpha-3}\delta\mathcal D(k)\|_Y^2,\quad k\ge K_0,
 \qquad |I(\zeta)|\le C(1+|\zeta|)^2e^{2T|\Im\zeta|}.
\]
Choose $A\ge2T$ and then $M\ge1$ sufficiently large. Since
$|\Im\zeta|<\Re\zeta-a$ and $|1+\zeta|\ge(1+|\zeta|)/\sqrt2$ in $\mathcal S_a$, the function
\[
 p(\zeta)=\frac{e^{-A\zeta}I(\zeta)}{M(1+\zeta)^2}
\]
is bounded by one there. On $[K_0,K]$, it satisfies $|p|\le\varepsilon^2$. Lemma~\ref{ac} yields
\begin{equation}\label{eq:continued-data}
 I(k)\le M(1+k)^2
 \exp\left\{Ak-2c|\ln\varepsilon|\frac{K^2}{k^2}\right\},
 \qquad k>K.
\end{equation}

Put $E=|\ln\varepsilon|>1$. Choose a fixed $\eta\in(0,1]$ such that
$2c\eta^{-3}\ge A+1$ and set $q=\eta(K^2E)^{1/3}$.
If $q>K$, then \eqref{eq:continued-data} gives
\[
 I(q)\le M(1+q)^2e^{-q}\le Cq^{-2}.
\]
Applying \eqref{crucial} at $q$ therefore gives
\[
 \|h\|_{H^{-1/2}}^2\le Cq^{-2}
 \le CK^{-4/3}E^{-2/3}.
\]
If $q\le K$, then \eqref{crucial} at the measured frequency $K$ gives
\[
 \|h\|_{H^{-1/2}}^2\le C(\varepsilon^2+K^{-2})
 \le C(\varepsilon^2+K^{-4/3}E^{-2/3}),
\]
where the last step uses $E/K\le\eta^{-3}$. Finally, the measured-frequency estimate
$\|h\|_{H^{-1/2}}^2\le C(\varepsilon^2+K^{-2})$ holds in both cases. Taking the smaller of these bounds proves \eqref{stability}.
\end{proof}

\begin{remark}
The estimate combines a Lipschitz data term with a logarithmic remainder. For a fixed normalized discrepancy, the remainder decreases as the upper frequency $K$ increases. The minimum in \eqref{stability} also retains the direct estimate available at the largest measured frequency. No uniformity is asserted as $\alpha$ approaches either endpoint of $(0,1)$.
\end{remark}

\begin{corollary}\label{cor:L2-stability}
Under the hypotheses of Theorem~\ref{main},
\[
 \|\rho_1-\rho_2\|_{L^2(\mathbb R^3)}^2
 \le C\left(\varepsilon^2+
 \min\left\{K^{-2},K^{-4/3}|\ln\varepsilon|^{-2/3}\right\}\right)^{\frac{2s}{2s+1}}.
\]
\end{corollary}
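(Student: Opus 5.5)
The plan is to interpolate between the $H^{-1/2}$ bound of Theorem~\ref{main} and the a priori bound $\|h\|_{H^s}\lesssim F$, which comes from $h=\rho_1-\rho_2\in\mathcal C_{2F}$. Sobolev interpolation is carried out on the Fourier side. Set $\theta=\frac{1/2}{s+1/2}=\frac{1}{2s+1}$. Then $0=\theta s+(1-\theta)(-1/2)$, so Hölder's inequality applied to $\int|\widehat h|^2=\int(\langle\xi\rangle^{2s}|\widehat h|^2)^{\theta}(\langle\xi\rangle^{-1}|\widehat h|^2)^{1-\theta}\,d\xi$ gives
\[
\|h\|_{L^2}^2\le \|h\|_{H^s}^{2\theta}\,\|h\|_{H^{-1/2}}^{2(1-\theta)}
=\|h\|_{H^s}^{\frac{2}{2s+1}}\,\|h\|_{H^{-1/2}}^{\frac{4s}{2s+1}}.
\]
Since $2(1-\theta)=\frac{4s}{2s+1}$, the squared $H^{-1/2}$ norm appears raised to the power $\frac{2s}{2s+1}$. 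This is exactly the exponent in the statement.

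Next, bound $\|h\|_{H^s}$ uniformly. The function $h$ lies in $W^{s,\infty}$ with norm at most $2F$ and is supported in $B_r$. Hence $\|\partial^\nu h\|_{L^2}\le |B_r|^{1/2}\|\partial^\nu h\|_{L^\infty}$ for $|\nu|\le s$. Since $s$ is an integer, this gives $\|h\|_{H^s}\le C(F,r,s)$. Substituting this into the interpolation inequality and inserting \eqref{stability} for $\|h\|_{H^{-1/2}}^2$ produces
\[
\|h\|_{L^2}^2\le C\Bigl[\varepsilon^2+\min\{K^{-2},K^{-4/3}|\ln\varepsilon|^{-2/3}\}\Bigr]^{\frac{2s}{2s+1}},
\]
with the constants absorbed into $C$. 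The constant depends on the same quantities as in Theorem~\ref{main}.

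No step here is a real obstacle. The only points to check are that the Fourier-side Hölder split uses the weights $\langle\xi\rangle$ consistently with the $H^{-1/2}$ norm of Theorem~\ref{main}, and that the integer regularity $s\ge4$ makes the $H^s$ bound elementary.
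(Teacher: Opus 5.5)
Your proposal is correct and follows the paper's proof. Like the paper, you bound $\|h\|_{H^s}$ using the common support and the $W^{s,\infty}$ bound, apply H\"older's inequality in Fourier space to obtain $\|h\|_{L^2}^2\le\|h\|_{H^{-1/2}}^{4s/(2s+1)}\|h\|_{H^s}^{2/(2s+1)}$, and then insert the estimate of Theorem~\ref{main}.
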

\begin{proof}
The common support and the $W^{s,\infty}$ bound imply $\|h\|_{H^s}\le C$. H\"older's inequality in Fourier space gives
\[
 \|h\|_{L^2}^2\le
 \|h\|_{H^{-1/2}}^{\frac{4s}{2s+1}}
 \|h\|_{H^s}^{\frac2{2s+1}}.
\]
The result follows from Theorem~\ref{main}.
\end{proof}

 \begin{remark}
For an inverse potential problem, quantitative unique continuation results such as \cite{grsu,ruland} suggest another route from exterior observations to stability. Their use for outgoing fractional Helmholtz fields would require estimates in the appropriate solution and observation spaces. 
\end{remark}

To conclude this section, we briefly discuss far-field data. For fixed $k>0$, the Green function expansion gives, almost surely,
\[
u(x,k)=\frac{k^{2-2\alpha}}{\alpha}
\frac{e^{ik|x|}}{4\pi|x|}u^\infty(\hat x,k)+ O(|x|^{-2}),
\qquad \hat x=x/|x|,
\]
uniformly in $\hat x$, where
\[
u^\infty(\hat x,k)=\int_{\mathbb R^3}e^{-ik\hat x\cdot y}\sigma(y)\,dW_y.
\]
 The $O(|x|^{-2})$ term includes the next classical spherical-wave term; the fractional correction alone is $O(|x|^{-3-2\alpha})$. With the same source realization used across frequencies, the white-noise isometry gives
\[
\mathbb E[u^\infty(\hat x,k+\tau)\overline{u^\infty(\hat x,k)}]
=\widehat{\sigma^2}(\tau\hat x),\qquad  k,k+\tau>0.
\]
This identity connects the far-field correlation directly to the Fourier transform of the variance, as in the classical inverse source analysis \cite{blz}.

\section{Conclusion}


 We have investigated the direct and inverse random source problems for the fractional Helmholtz equation. For every \(0<\alpha<1\), we establish the existence and uniqueness of the outgoing distributional solution, together with its exterior stochastic representation and high-frequency resolvent estimates. When \(\alpha>3/4\), this solution further admits a continuous mild realization in the whole space. For the inverse problem, we show that the variance  is uniquely determined at a single frequency from exterior correlation data with one receiver fixed and the other varying over an arbitrary nonempty exterior open set. For multifrequency measurements on an exterior shell and a surrounding sphere, the high-frequency expansion of the fractional Green function, combined with a geometric optics construction, connects the correlation data to the X-ray transform of the variance. This connection, together with the stability properties of the X-ray transform and analytic continuation in the frequency variable, leads to an increasing stability estimate as the upper endpoint of the measured frequency interval grows. A natural direction for further study is inverse random potential scattering for the fractional Schr\"odinger equation.

\appendix
\section{Proofs of Lemma \ref{lem1} and Proposition \ref{free-frac}}

\subsection{Proof of Lemma \ref{lem1}}

Define the sine propagator
\[
U(t)=\frac{\sin(t\sqrt{-\Delta})}{\sqrt{-\Delta}}.
\]
For $\Im\lambda>0$, the spectral theorem gives
\[
R_0(\lambda)=\int_0^\infty e^{i\lambda t}U(t)\,dt.
\]
 The Fourier multiplier also shows that this is holomorphic from $H^s$ to $H^{s+2}$ in the upper half-plane; intermediate orders follow by interpolation.

Fix $L>\operatorname{diam}(\operatorname{supp}\chi)$. In dimension three, the strong Huygens principle gives
\[
\chi R_0(\lambda)\chi=\int_0^L e^{i\lambda t}\chi U(t)\chi\,dt.
\]
 For $0\leq t\leq L$, the spectral theorem yields
\[
\|U(t)\|_{H^s\to H^{s+1}}\leq C_L,\qquad
\|\partial_tU(t)\|_{H^s\to H^s}\leq1.
\]
Thus the finite integral is entire and has an $H^s\to H^{s+1}$ bound $Ce^{L(\Im\lambda)_-}$. Since its endpoint terms vanish, integration by parts gives, for $\lambda\neq0$,
\[
\chi R_0(\lambda)\chi
=-\frac{1}{i\lambda}\int_0^L e^{i\lambda t}\chi\partial_tU(t)\chi\,dt.
\]
 Together with the direct integral for $|\lambda|\leq1$, this proves \eqref{res1} for $m=0,1$.

 For $m=2$, use
\[
\Delta(\chi R_0(\lambda)\chi)
=-\lambda^2\chi R_0(\lambda)\chi-\chi^2
+[\Delta,\chi]R_0(\lambda)\chi.
\]
Choose a cutoff $\chi_1=1$ near $\operatorname{supp}\chi$ whose support still has diameter less than $L$. The $m=1$ estimate for $\chi_1R_0(\lambda)\chi_1$ controls the commutator in $H^s$. The elliptic estimate for $1-\Delta$ therefore gives the $H^s\to H^{s+2}$ bound $C(1+|\lambda|)e^{L(\Im\lambda)_-}$ and the corresponding holomorphy. Interpolation proves the remaining cases.

\subsection{Proof of Proposition \ref{free-frac}}

Let
\[
D_\alpha(\gamma,\lambda)
=\gamma^{2\alpha}-2\gamma^\alpha\lambda^{2\alpha}\cos(\pi\alpha)+\lambda^{4\alpha}.
\]
 For $\lambda\in S_{\theta_0}$, the arguments of $\lambda^{2\alpha}$ lie strictly between $-\pi\alpha$ and $\pi\alpha$. The factorization
\[
D_\alpha(\gamma,\lambda)
=(\lambda^{2\alpha}-\gamma^\alpha e^{i\pi\alpha})
(\lambda^{2\alpha}-\gamma^\alpha e^{-i\pi\alpha})
\]
therefore has no zero for $\gamma\geq0$. Scaling by $|\lambda|^{4\alpha}$ and compactness of the angular interval give
\begin{equation}\label{eq:denominator}
|D_\alpha(\gamma,\lambda)|
\geq c_{\alpha,\theta_0}(\gamma^{2\alpha}+|\lambda|^{4\alpha}).
\end{equation}
Indeed, after setting $t=\gamma/|\lambda|^2$, the ratio to $1+t^{2\alpha}$ is continuous and positive on bounded $t$ intervals and tends uniformly to one as $t\to\infty$.

Denote the integral term in \eqref{formula2} by $J(\lambda)$. Its Fourier
multiplier is
\[
b_J(q,\lambda)=\frac{\sin(\pi\alpha)}{\pi}
\int_0^\infty
\frac{\gamma^\alpha}
{(\gamma+q^2)D_\alpha(\gamma,\lambda)}\,d\gamma,
\qquad q=|\xi|.
\]
For each $q\geq0$, the integral is absolutely convergent and holomorphic in $\Re\lambda>0$. In particular, at $q=0$ the integrand is bounded locally by $C\gamma^{\alpha-1}$ near zero and by $C\gamma^{-\alpha-1}$ near infinity.

Set $a=|\lambda|$ and $Q=q/a$. By \eqref{eq:denominator},
\[
|b_J(q,\lambda)|
\leq Ca^{-2\alpha}\int_0^\infty
\frac{t^\alpha}{(t+Q^2)(1+t^{2\alpha})}\,dt.
\]
If $0\leq Q\leq1$, this integral is bounded by $\int_0^\infty t^{\alpha-1}(1+t^{2\alpha})^{-1}\,dt<\infty$.
If $Q\geq1$, splitting it at $Q^2$ gives
\[
\begin{split}
\int_0^\infty\frac{t^\alpha}{(t+Q^2)(1+t^{2\alpha})}\,dt
&\leq Q^{-2}\int_0^{Q^2}\frac{t^\alpha}{1+t^{2\alpha}}\,dt
  +\int_{Q^2}^\infty t^{-\alpha-1}\,dt\\
&\leq C Q^{-2\alpha}.
\end{split}
\]
Consequently,
\begin{equation}\label{eq:J-multiplier}
|b_J(q,\lambda)|\leq C(|\lambda|^2+q^2)^{-\alpha},
\qquad \lambda\in S_{\theta_0}.
\end{equation}
Plancherel's theorem now gives, for $|\lambda|\geq1$ and
$0\leq l\leq2\alpha$,
\[
\|J(\lambda)\|_{H^s\to H^{s+l}}
\leq C\sup_{q\geq0}(1+q^2)^{l/2}(|\lambda|^2+q^2)^{-\alpha}
\leq C|\lambda|^{l-2\alpha}.
\]
Multiplication by $\rho$ is bounded on every Sobolev space, so the same bound holds for $\rho J(\lambda)\rho$.
For the classical term, Lemma~\ref{lem1} gives
\[
\left\|\frac{\lambda^{2-2\alpha}}{\alpha}
\rho R_0(\lambda)\rho\right\|_{H^s\to H^{s+l}}
\leq C|\lambda|^{1+l-2\alpha}e^{T(\Im\lambda)_-}.
\]
Combining the last two inequalities proves \eqref{bound_2}.

It remains to justify holomorphy at the endpoint $l=2\alpha$. On every compact subset of the right half-plane, \eqref{eq:J-multiplier} bounds $(1+q^2)^\alpha b_J(q,\lambda)$ uniformly in $q$ and $\lambda$.
Apply the scalar Cauchy formula on a small circle contained in that half-plane, for each fixed $q$. The resulting Cauchy estimates and Taylor remainders are uniform after multiplication by $(1+q^2)^\alpha$.
Thus $J(\lambda)$ is holomorphic in the operator norm of $\mathcal L(H^s,H^{s+2\alpha})$. The assertion for smaller $l$ follows by continuous embedding.
The classical truncated term is entire by Lemma~\ref{lem1}, while $\lambda^{2-2\alpha}$ is holomorphic in the right half-plane.
Formula~\eqref{formula2} therefore supplies the asserted continuation.
It agrees with the spectral resolvent in the first quadrant and with the outgoing boundary value on the positive real axis.


\begin{thebibliography}{99}

\bibitem{arridge}
S. Arridge, Optical tomography in medical imaging, Inverse Problems, 15 (1999), R41--R93.

\bibitem{BCL2016}
G. Bao, C. Chen, and P. Li, Inverse random source scattering problems in several dimensions, SIAM/ASA J. Uncertain. Quantif., 4 (2016), 1263--1287. 

\bibitem{blz}
G. Bao, P. Li, and Y. Zhao, Stability for the inverse source problems in elastic
and electromagnetic waves, J. Math. Pures Appl., 134 (2020), 122--178.






\bibitem{bl}
N. S. Belevtsov and S. Y. Lukashchuk, A fast algorithm for fractional Helmholtz equation with application to
electromagnetic waves propagation, Appl. Math. Comput., 416 (2022), 126728.



\bibitem{BV}
C. Bucur and E. Valdinoci, Non-local diffusion and applications, Lecture Notes of the Unione Matematica Italiana, 20, Springer, 2016.

\bibitem{CL}
X. Cao and H. Liu, Determining a fractional Helmholtz equation with unknown source and scattering potential, Commun. Math. Sci., 17(2019), 1861--1876.

\bibitem{DZ}
S. Dyatlov and M. Zworski, Mathematical Theory of Scattering Resonances, volume 200, American
Mathematical Soc., 2019.

\bibitem{sisc}
C. Glusa, H. Antil, M. D' Elia, B. van Bloemen Waanders, and C. J. Weiss, A fast solver for the fractional
Helmholtz equation, SIAM J. Sci. Comput., 43 (2021), A1362--A1388.

\bibitem{grsu}
T. Ghosh, A. R\"uland, M. Salo, and G. Uhlmann, Uniqueness and reconstruction for the fractional Calder$\rm \acute o$n
problem with a single measurement, J. Funct. Anal., 279 (2020), 108505.

\bibitem{gsu}
T. Ghosh, M. Salo, and G. Uhlmann, The Calder$\rm \acute o$n problem for the fractional Schr\"odinger equation, Anal. PDE,
13 (2020), 455--475.

\bibitem{HSZ}
X. Huang, Y. Sire, and C. Zhang, Interior estimates for the eigenfunctions of the fractional Laplacian on a bounded domain, Adv. Math., 392 (2021), 108032.

\bibitem{Isakov_1}
V. Isakov, Inverse Source Problems, AMS, Providence, RI, 1990.



\bibitem{S1}
J. Kraisler and J. C. Schotland, Collective spontaneous emission and kinetic equations for one-photon light in
random media, J. Math. Phys., 63 (2022), 031901.

\bibitem{S2}
J. Kraisler and J. C. Schotland, Kinetic equations for two-photon light in random media, J. Math. Phys., 64
(2023), 111903.

\bibitem{Laskin1}
Laskin, N. Fractional quantum mechanics, Phys. Rev. E 62,
3135--3145 (2000).

\bibitem{Laskin2}
Laskin, N. Fractional quantum mechanics and L$\rm \acute e$vy path integrals.
Phys. Lett. A 268, 298--305 (2000).

\bibitem{LPS}
M. Lassas, L. P{\"a}iv{\"a}rinta, and E. Saksman,
Inverse scattering problem for a two dimensional random potential,
Comm. Math. Phys., 279 (2008), 669--703.

\bibitem{LLW}
J. Li, P. Li, and X. Wang, Inverse source problems for the stochastic wave equations: far-field
patterns, SIAM J. Appl. Math., 82 (2022), 1113--1134.

\bibitem{LLM}
J. Li, H. Liu, and S. Ma,
Determining a random Schr\"odinger operator: both potential and source are random,
Comm. Math. Phys., 381 (2021), 527--556.

\bibitem{li2021inverse}
P. Li and X. Wang, Inverse random source scattering for the Helmholtz equation with attenuation.
SIAM J. Appl. Math., 81 (2021), 485--506.




\bibitem{LZZ}
P. Li, J. Zhai, and Y. Zhao, Stability for the acoustic inverse source problem in inhomogeneous media, SIAM J. Appl. Math., 80 (2020), 2547--2559.


\bibitem{LL}
P. Li and Z. Li, An inverse random source problem for the fractional Helmholtz equation, arXiv: 2602.19559v1.

\bibitem{msq}
S. Das, T. Ghosh, S. Ma,
Inverse scattering for the fractional Schr\"odinger equation,
arXiv:2509.12685, 2025.

\bibitem{MS}
C. Martinez and M. Sanz, The Theory of Fractional Powers of Operators, North Holland Math Stud. 187, Elsevier,
Amsterdam, 2001.


\bibitem{Ros}
X. Ros-Oton, Nonlocal elliptic equations in bounded domains: a survey, Publ. Mat. 60 (2016), 3--26.


\bibitem{ruland}
A. R\"uland, On single measurement stability for the fractional Calder$\rm \acute o$n problem, SIAM J. Math. Anal., 53 (2021),
5094--5113.

\bibitem{Stefanov}
P. Stefanov, Scattering and Inverse Scattering, Lecture Notes.

\bibitem{SU}
P. Stefanov and G. Uhlmann, Microlocal Analysis and Integral Geometry, American
Mathematical Soc., 2026.


\bibitem{TT}
V. E. Tarasov and J. J. Trujillo, Fractional power-law spatial dispersion in electrodynamics, Ann. Physics, 334
(2013), 1--23.



\bibitem{WXZ_1}
T. Wang, X. Xu, and Y. Zhao, Stability for a multi-frequency inverse random source problem.
Inverse Problems, 40 (2024), 125029.


\bibitem{WXZ}
T. Wang, X. Xu, and Y. Zhao, Stability for an inverse random source problem of the biharmonic Schr\"odinger equation, arXiv:2412.16836, 2024.




\bibitem{ZCV}
D. Zilberberg, F. Cakoni, and M. S. Vogelius, Limiting absorption principle and radiation condition for the fractional Helmholtz equation, arXiv:2602.18387, 2026.

\end{thebibliography}
\end{document}